\newtheorem{thm}{Theorem}[section]
\newtheorem{cor}[thm]{Corollary}
\newtheorem{prop}[thm]{Proposition}
\theoremstyle{definition}
\newtheorem{rem}[thm]{Remark}
\numberwithin{equation}{section}
\numberwithin{equation}{section}
\newcommand{\R}{{\mathbb R}}
\newcommand{\C}{{\mathbb C}}
\newcommand{\N}{{\mathbb N}}
\newcommand{\cD}{{\mathcal D}}
\newcommand{\cL}{{\mathcal L}}
\newcommand{\cS}{{\mathcal S}}
\newcommand{\cO}{{\mathcal O}}
\newcommand{\su}{\subseteq}
\newcommand\proj{\mathop{\rm proj\,}}
\newcommand\ind{\mathop{\rm ind\,}}
\begin{document}

%
%
%
%---------------------------------------------------------------------------
%Insert here the title, affiliations and abstract:
%

\title[Dynamics of composition operators\dots]
 {Dynamics of composition operators on function spaces defined by local and global properties }

%----------Author 1
\author[A.A. Albanese, E. Jord\'a, C. Mele]{Angela\,A. Albanese, Enrique Jord\'a and Claudio Mele}

\address{ Angela A. Albanese\\
Dipartimento di Matematica e Fisica ``E. De Giorgi''\\
Universit\`a del Salento- C.P.193\\
I-73100 Lecce, Italy}
\email{angela.albanese@unisalento.it}

\address{Enrique Jord\'a\\ Instituto Universitario de Matem\'atica Pura y Aplicada IUMPA\\ Universitat Polit\`ecnica de Val\`encia,
EPS Alcoy Plaza Ferrándiz y Carbonell, s/n, 03801 Alcoy (Alicante), Spain.}
\email{ejorda@mat.upv.es}

\address{Claudio Mele\\
	Dipartimento di Matematica e Fisica ``E. De Giorgi''\\
	Universit\`a del Salento- C.P.193\\
	I-73100 Lecce, Italy}
\email{claudio.mele1@unisalento.it}

\thanks{\textit{Mathematics Subject Classification 2020:}
Primary 46E10, 46F05, 47A16; Secondary   47A35, 47B38.}
%\thanks{\textsuperscript{*} Corresponding author}
\keywords{Spaces of continuus functions, composition operator, power bounded operator, mean ergodic operator, hypercyclic operator, supercyclic oeprator}

%----------classification, keywords, date
%\subjclass{Primary 47B38, 46E10, 46F05; Secondary 47256, 47A35.}

%\keywords{Ces\`aro operator, Fr\'echet space of smooth functions, spectrum, mean ergodic operator, $C_0$-semigroup.}

%\date{26/05/2015}
%----------additions
%\dedicatory{To my boss}
%%% ----------------------------------------------------------------------

\begin{abstract} In this paper we consider composition operators on locally convex spaces of functions defined on $\R$. We prove results concerning supercyclicity, power boundedness, mean ergodicity and convergence of the iterates in the strong operator topology.
\end{abstract}

%%% ----------------------------------------------------------------------
\maketitle
%%% ----------------------------------------------------------------------
%\tableofcontents
\section{Introduction }\label{intro}
In this paper we deal with composition operators  defined in locally convex Hausdorff spaces of  real valued continuous functions. We study when the operator has \emph{big orbits}, i.e., when it is supercyclic or hypercyclic, and also when it has \emph{small orbits}, i.e., when the operator is power bounded or mean ergodic. We collect in this section our results and relate them with recent literature.

We focus our attention on composition operators on locally convex  Hausdorff spaces $X\hookrightarrow C^m(\R)$, for $m\in\N_0\cup \{\infty\}$. For a function $\varphi\in C^{m}(\R)$, being increasing and with no fixed points is equivalent to being {\em strongly runaway}, i.e.,  for all compact subset $K\subset \R$  there is $n_0\in\N$ such that $\varphi^n(K)\cap K=\emptyset$ for all $n\geq n_0$ (see \cite[Lemma 4.1]{Pr2} for a proof).
% For $X=C^m(\R)$ and $\varphi\in C^{m}(\R)$  or more generally for the one variable spaces defined by local properties studied in \cite{Kalmes2}, (*) can be checked immediately. 
 Kalmes got in \cite[Corollary 4.1, Corollary 4.2]{Kalmes2}  the equivalence between hypercyclicity and  strongly runaway in a more general context, which includes weighted composition operators defined in function spaces of  several variables.  The results of Kalmes generalize those of Przestacki obtained in \cite[Lemma 4.1, Theorem 4.5, Theorem 4.6]{Pr2} for composition operators on  $C^\infty(\Omega)$, with $\Omega\subseteq\R^d$ open.   Our main result here is that, under natural assumptions on $X$, $C_\varphi$ being weakly supercyclic implies that $\varphi$ is increasing, with no vanishing derivative when $m>0$, and without fixed points. This, using previous works of Bonet and Doma\'nski and Kalmes, permits us to conclude that weak supercyclicity is equivalent to being mixing for composition operators defined on $C^m(\R)$ with $m\in \N\cup \{\infty\}$ and that weak supercyclicity implies to be mixing for composition operators defined   on $\mathcal{A}(\R)$.

We also obtain results for power boundedness and mean ergodicity of composition operators. We prove that for a Montel space $X\hookrightarrow C^1(\R)$  satisfying that, for every symbol $\varphi\in X$
\begin{itemize}
\item[(*)]   The composition operator $C_\varphi:X\to X$ is power bounded if, and only if, the sequence $\{\varphi_n\}_{n\in\N}$  of iterates of $\varphi$ is bounded in $X$,
\end{itemize}
under natural assumptions on $X$, choosing a monotonic $\varphi\in X$ with $\varphi_2(x)\neq x$, then the composition operator $C_\varphi$ is power bounded if, and only if, $\varphi$ has a unique  fixed point $a$ on which the sequence of the iterates $\varphi_n$ of $\varphi$ converges if, and only if, the sequence $\{C_{\varphi_n}\}_{n\in\N}$ is convergent in $\cL_s(X)$ to $C_{a}$ ($C_{a}:X\to X, f\mapsto f(a)$).

Furthermore, if we restrict to particular locally convex spaces $X$ and a polynomial $\varphi(x)$, then the composition operator is equivalently power bounded and mean ergodic if, and only if,  $\varphi(x)=ax+b$, $|a|<1$. The excluded cases $\varphi(x)=x$ and $\varphi(x)=-x+b$ produce trivially power bounded uniformly mean ergodic composition operators. The equivalence between power boundedness, mean ergodicity and the convergence of the iterates of $\varphi$ to the unique fixed point of $\varphi$ has been proved in \cite{BD} to be true for $\mathcal{A}(\R)$, with no restriction on the symbol. Besides $\mathcal{A}(\R)$, our result applies to $C^\infty(\R)$,  which satisfies (*) by \cite{Kalmes} and to the space $\cO_M(\R): =\proj_{\stackrel{m}{\leftarrow}}\ind_{\stackrel{n}{\to}}\cO_{n}^m(\R)$ of the multipliers of the classical  space $\cS(\R)$.

The structure of the paper is the following: Section 2 is devoted to preliminaries and to the study of the symbols in $\cO_M(\R)$ and $\cO^m(\R)$, $m\in\N$. 
In Section 3 we study weakly supercyclic composition operators. In Section 4 we obtain respectively the results on dynamics explained above. Finally, Section 5 is devoted to show that $\cO_M(\R)$ and $\cO^m(\R)$, $m\in\N$, satisfy condition (*).

\section{Preliminaries and the action of the composition operator on $\cO_M(\R)$}
\subsection{Preliminaries}
Throught the paper, we denote by $\N$  the set of all positive integer numbers and set $\N_0:=\N\cup \{0\}$.

 Let X be a locally convex Hausdorff space (briefly, lcHs). We denote by $\cL(X)$ 
the space of all continuous linear operators from $X$ into itself.

An operator $T\in \cL(X)$, with $X$ a lcHs, is called \textit{power bounded} if $\{T^n\}_{n\in\N}$ is an
equicontinuous subset of $\cL(X)$.

The  Ces\`aro means of an operator $T\in\cL(X)$, with $X$ a lcHs, are defined by
\[
T_{[n]} :=\frac{1}{n}\sum_{m=1}^nT^m,\quad n\in\N.
\]
The operator  $T$ is called \textit{mean ergodic} (resp. \textit{uniformly mean ergodic}) if $\{T_{[n]}\}_{n\in\N}$ 
is a convergent
sequence in $\cL_s(X)$ (resp. in $\cL_b(X)$), where $\cL_s(X)$ denotes $\cL(X)$ endowed with the strong operator topology $\tau_s$ (resp. $\cL_b(X)$ denotes $\cL(X)$ endowed with the topology $\tau_b$  of the uniform convergence on bounded subsets of $X$). 

For reflexive Fr\'echet spaces (in Montel Fr\'echet spaces, resp.) every power
bounded operator is necessarily mean ergodic (uniformly mean ergodic, resp.) by \cite[Corollary 2.7, Proposition 2.9]{ABR-0}. There exist mean ergodic operators which are not power bounded, see, f.i., \cite[\S 6]{H}.
For further results on mean ergodic operators we refer to \cite{Kr,Y}. For recent results on mean ergodic operators in
lcHs' we refer to \cite{ABR-0,ABR-JJ,ABR-2,AM,BJ,FeGaJo,Kalmes}, for example, and the references therein.

Given an operator $T\in \cL(X)$, we denote by $\sigma(T)$ the \textit{spectrum} of $T$, i.e., the subset of $\C$ formed by the $\lambda$ such that $\lambda I-T$ is not invertible, and by $\sigma_p(T)$ its \textit{point spectrum}, i.e., the subset of $\sigma(T)$ formed by those $\lambda$ such that $\lambda I-T$ is not injective.
\\[0.1cm]
Moreover, we recall that  $T'$ denotes as usual the dual operator of $T$. We have that $T\in \cL(X'_\beta)$ and also $T'\in \cL(X'_\sigma)$, where $X'_\beta$ and $X'_\sigma$ stand for the topological dual of $X$ endowed with the strong topology $\beta(X',X)$ and the weak topology $\sigma(X',X)$ resp.

Let  $X$ be a lcHs of functions defined on $\R$ and let $\varphi\colon \R\to \R$ be a function. If $f\circ \varphi\in X$ for all $f\in X$, then we can consider the composition operator $C_\varphi\colon X\to X$, $f\mapsto f\circ \varphi$. The operator $C_\varphi$ is clearly linear. In case that $C_\varphi\in \cL(X)$, the function $\varphi$ is said to be a \textit{symbol} for $X$.

Given a function $\varphi\colon\R\to\R$,  we denote by $\varphi_n$ the $n$-th iterate of $\varphi$ for all $n\in\N$, i.e., $\varphi_n:=\varphi\circ\ldots\circ\varphi$ $n$-times, and define $\varphi_{[n]}:=\frac{1}{n}\sum_{k=1}^n\varphi_k$ for all $n\in\N$. In case the function $\varphi$ is bijective and hence there exists $\varphi^{-1}\colon\R\to\R$, we denote by $\varphi_{-n}$ the $n$-th iterate of $\varphi^{-1}$ for all $n\in\N$.
We also recall that  a function $\varphi\colon\R\to\R $ is said to be increasing (decreasing, resp.) if $\varphi(x)<\varphi(y)$ ($\varphi(x)>\varphi(y)$, resp.) for every  $x,y\in\R$ such that  $x<y$. The function $\varphi$ is said to be non decreasing (non increasing, resp.) if $\varphi(x)\leq\varphi(y)$ ($\varphi(x)\geq\varphi(y)$, resp.) for every  $x,y\in\R$ such that  $x<y$.

In what follows, we recall the necessary definitions and some basic properties of the spaces $\cS(\R)$ and $\cO_M(\R)$. For general information about function spaces in the theory of distributions we refer to \cite{Ch,Sh}.

The space $\cS(\R)$ of \textit{rapidly decreasing functions} is a nuclear Fr\'echet space and hence, it is Montel  and reflexive. Accordingly, its strong dual $\cS'(\R)$ is  a nuclear lcHs. In particular, $\cS'(\R)$ is a barrelled and bornological lcHs. 

The space   $\cO_M(\R)$  of \textit{slowly increasing functions} on $\R$ is given by
\[
\cO_M(\R)=\cap_{m=1}^\infty\cup_{n=1}^\infty\cO^{m}_{n}(\R),
\]
where $\cO_{n}^m(\R):=\{f\in C^{m}(\R)\colon |f|_{m,n}:=\sup_{x\in\R}\sup_{0\leq i\leq m}(1+x^2)^{-n}|f^{(i)}(x)|<\infty\}$, endowed with the norm $|\cdot |_{m,n}$, is a Banach space for any $m,n\in\N$.  
The space $\cO_M(\R)$, endowed with its natural lc-topology, i.e., $\cO_M(\R): =\proj_{\stackrel{m}{\leftarrow}}\ind_{\stackrel{n}{\to}}\cO_{n}^m(\R)$, is a projective limit of the complete (LB)-spaces $\cO^m(\R):=\ind_{\stackrel{n}{\to}}\cO_{n}^m(\R)$, for $m\in\N$. In particular, $\cO_M(\R)$ is a bornological nuclear lcHs (hence, Montel and reflexive), and it is continuously embedded in $C^\infty(\R)$ (see \cite{Gro}).
Furthemore, a fundamental system of continuous norms on $\cO_M(\R)$ is given by
\[
p_{m,v}(f)=\sup_{x\in\R}\sup_{0\leq i\leq m}|v(x)||f^{(i)}(x)|,\quad f\in \cO_M(\R),
\]
where $v\in \cS(\R)$ and $m\in\N$ (see, f.i., \cite{Ch}).

The space $\cO_M(\R)$ is the space of multipliers of $\cS(\R)$ and its strong dual $\cS'(\R)$. So, for any fixed $h\in\cO_M(\R)$, the multiplication operator $M_h\colon \cS(\R)\to \cS(\R)$, $f\mapsto hf$, is continuous.

The spaces $C^m(\R)$, $m\in\N_0\cup\{\infty\}$, are always endowed with their natural lc-topology $\tau_c$ given from the canonical fundamental sequence of seminorms. We point out that $(C^m(\R),\tau_c)$ is a Fr\'echet space. %and that a fundamental sequence of seminorms generating the lc-topolgy $\tau_c$ is given by 
%\begin{align*}
%& |||f|||_{n,m}:=\sup_{x\in [-n,n]}\sup_{0\leq i\leq m}|f^{(i)}(x)|, n\in\N\ \ \mbox{ if } m\in\N_0,\\
%& |||f|||_n:=\sup_{x\in [-n,n]}\sup_{0\leq i\leq n}|f^{(i)}(x)|, n\in\N\ \ \mbox{ if } m=\infty.
%\end{align*}
In particular, $(C^\infty(\R),\tau_c)$ is a nuclear Fr\'echet space. Moreover, we denote by  $\cD^m(\R):=\{f\in C^m(\R)\, :\, {\rm supp}f\,\ {\rm is\, compact} \}$, for $m\in\N_0\cup\{\infty\}$ (we write $\cD(\R)$ for $m=\infty$). Then $\cD^m(\R)=\cup_{k=1}^\infty C^m_0([-k,k])$ for all $m\in\N$ and so, endowed  with its natural lc-topology, i.e., $\cD^m(\R):=\ind_{\stackrel{k}{\to}}C^m_0([-k,k])$, is a complete (LB)-space ((LF)-space, for $m=\infty$). 

%The symbols for $\cD^m(\R)$ are the \textit{proper} maps $\varphi\in C^m(\R)$.

We remark the following fact.

\begin{prop}
\label{schurr}
Let $m\in\N$ and let $B$ be a bounded subset of $\cO^m(\R)$. Then the spaces  $\cO^m(\R)$ and $C^m(\R)$ induce on $B$ the same topology.
\end{prop}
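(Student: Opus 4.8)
The plan is to localise the question to a single Banach step of the (LB)-space $\cO^m(\R)$ and then compare the two topologies there by an elementary tail estimate. First I would invoke that $\cO^m(\R)$ is a \emph{regular} (LB)-space: this is classical, and it also follows from standard weighted-(LB) theory since the weights $v_n(x):=(1+x^2)^{-n}$ satisfy $v_n=v_1^{\,n}$ with $0<v_1\le 1$ and $v_1(x)\to 0$ as $|x|\to\infty$, so that the sequence $(v_n)_n$ is regularly decreasing. Consequently a bounded set of $\cO^m(\R)$ is contained in, and bounded in, a single step; i.e.\ there are $n_0\in\N$ and $M>0$ with $B\subseteq\cO^m_{n_0}(\R)$ and $\sup_{f\in B}|f|_{m,n_0}\le M$.

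Next I would record the chain of continuous inclusions
\[
\cO^m_{n_0}(\R)\hookrightarrow\cO^m_{n_0+1}(\R)\hookrightarrow\cO^m(\R)\hookrightarrow C^m(\R),
\]
the first one because $(1+x^2)^{-(n_0+1)}\le(1+x^2)^{-n_0}$, the remaining two being the canonical maps. Restricting the four topologies to $B$ gives $\tau_{C^m}|_B\le\tau_{\cO^m(\R)}|_B\le\tau_{\cO^m_{n_0+1}}|_B$, so it suffices to prove the reverse inequality $\tau_{\cO^m_{n_0+1}}|_B\le\tau_{C^m}|_B$; then all four topologies agree on $B$, and in particular $\tau_{C^m}|_B=\tau_{\cO^m(\R)}|_B$, which is the assertion.

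To obtain that last inequality I would take a sequence $(f_k)_k\subseteq B$ converging to $f$ in $C^m(\R)$. Passing to pointwise limits of the derivatives up to order $m$ shows $(1+x^2)^{-n_0}|f^{(i)}(x)|\le M$ for all $x$ and $i\le m$, hence $|f|_{m,n_0}\le M$ and $g_k:=f_k-f$ satisfies $|g_k|_{m,n_0}\le 2M$ with $g_k\to 0$ in $C^m(\R)$. For every $R>0$, splitting the supremum defining $|\cdot|_{m,n_0+1}$ at $|x|=R$ and using $(1+x^2)^{-1}\le(1+R^2)^{-1}$ on $|x|\ge R$ gives
\[
|g_k|_{m,n_0+1}\ \le\ \sup_{|x|\le R}\ \sup_{0\le i\le m}|g_k^{(i)}(x)|\ +\ \frac{1}{1+R^2}\,|g_k|_{m,n_0}\ \le\ \sup_{|x|\le R}\ \sup_{0\le i\le m}|g_k^{(i)}(x)|\ +\ \frac{2M}{1+R^2},
\]
where the first summand is a canonical seminorm of $C^m(\R)$ and hence tends to $0$ as $k\to\infty$. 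Letting first $k\to\infty$ and then $R\to\infty$ yields $|g_k|_{m,n_0+1}\to 0$, i.e.\ $f_k\to f$ in $\cO^m_{n_0+1}(\R)$. This is precisely $\tau_{\cO^m_{n_0+1}}|_B\le\tau_{C^m}|_B$, which completes the proof.

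The only substantial point is the first step, namely the regularity of $\cO^m(\R)$ that puts $B$ into one Banach step with a uniform bound $M$; once such an $M$ is available the rest is the routine estimate above, whose single idea is to spend one extra power of the weight $(1+x^2)$ so that the tails become uniformly small over $B$, while the compact part is controlled by $C^m$-convergence.
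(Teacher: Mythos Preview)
Your proof is correct and follows essentially the same strategy as the paper: localise to one step $\cO^m_{n_0}(\R)$ via regularity, then spend one extra power of the weight to show that on such a bounded set the $C^m$-topology and the $\cO^m_{n_0+1}$-topology coincide. The paper phrases this last comparison in terms of $0$-neighbourhoods (showing $\{f\in B_{n_0}^m:|f|_{m,n_0+1}<\varepsilon\}$ contains a $C^m$-neighbourhood of $0$), whereas you phrase it sequentially; since the $C^m$-topology is metrizable this is a purely cosmetic difference, and your splitting estimate at $|x|=R$ is exactly the paper's $(1+x^2)^{-1}<\varepsilon$ for $|x|>M$.
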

\begin{proof}
For any $p\in\N$ we will show that the spaces $\cO_{p+1}^{m}(\R)$ and $C^{m}(\R)$ induce the same topology  on the unit ball $B_p^m$ of $\cO_p^m(\R)$. To do this, we only need to show that for each $\varepsilon>0$, 
$U:=\{f\in B_p^m:\ |f|_{m,p+1}<\varepsilon\}$ is a neighborhood of $0$ in $B_p^m$ endowed with the topology inherited from $C^m(\R)$. So, let $M>0$ be such that $(1+x^2)^{-1}<\varepsilon$, whenever $|x|>M$ and let $ V:=\{f\in C^m(\R):\ |f^{(i)}(x)|<\varepsilon \text{ for all } i=0,\ldots, m,\ |x|\leq M\}$. Then  $B_p^m\cap V\subseteq U$. Indeed, if $f\in B_p^m\cap V$, then 
\[
(1+x^2)^{-p-1}|f^{(i)}(x)|\leq |f^{(i)}(x)|<\varepsilon, \quad i=0,\ldots, m, \ |x|\leq M,
\]
and 
{\small \[
(1+x^2)^{-p-1}|f^{(i)}(x)|=(1+x^2)^{-p}|f^{(i)}(x)|(1+x^2)^{-1}\leq (1+x^2)^{-1}<\varepsilon,\, i=0,\dots,m,\ |x|>M. 
\]}
Therefore, $f\in U$. The conclusion follows from the fact that the bounded subsets of $\cO^m(\R)$ are localized, i.e., $\cO^m(\R)$ is a regular (LB)-space.
\end{proof}

\begin{rem}\label{convom}
 As a consequence of Proposition \ref{schurr}, we get that a sequence $\{\varphi_n\}_{n\in\N}\subset \cO^m(\R)$ is convergent in $\cO^m(\R)$ to some $\varphi\in \cO^m(\R)$ if, and only if, $\{\varphi_n\}_{n\in\N}$  is a bounded sequence in  $\cO^m(\R)$ and $\lim_{n\to\infty} \varphi_n=\varphi$ in $C^m(\R)$. Therefore, as  $\cO_M(\R)$ is the projective limit of the complete (LB)-spaces $\cO^m(\R)$, for $m\in\N$, we can then conclude  that a sequence $\{\varphi_n\}_{n\in\N}\subset \cO_M(\R)$ is convergent in $\cO_M(\R)$ to some $\varphi\in \cO_M(\R)$ if, and only if, $\{\varphi_n\}_{n\in\N}$  is a bounded sequence in  $\cO_M(\R)$ and $\lim_{n\to\infty} \varphi_n=\varphi$ in $C^\infty(\R)$. 
 Therefore, for every sequence $\{f_n\}_{n\in\N}\subset\cD(\R)$ satisfying the condition for every compact subset $K$ of $\R$  there exists $n_0\in\N$ such that ${\rm supp}f_n\cap K=\emptyset$ for all $n\geq n_0$, we get that $\{f_n\}_{n\in\N}$ is convergent to $0$ in $\cO^m(\R)$  (in $\cO_M(\R)$, resp.) if, and  only if, it is a bounded sequence of $\cO^m(\R)$ (of $\cO_M(\R)$, resp.). 
 
 We point out that there are examples of sequences $\{f_n\}_{n\in\N}\subset\cD(\R)$ satisfying the previous condition but not bounded in $\cO^m(\R)$ (in $\cO_M(\R)$, resp.). Indeed, for all $n\in\N$ let $f_n\in \cD(\R)$ be such that $f_n(x):=n(1+x^2)^n$ for $x\in K_n:=\left[n+\frac{1}{n}, n+1-\frac{1}{n}\right]$ and supp$f_n\su [n,n+1]$. Then for any $m, p\in\N$ we have
  \begin{align*}
  |f_n|_{m,p}&=\sup_{x\in\R}\sup_{0\leq i\leq m}(1+x^2)^{-p}|f_n^{(i)}(x)|\\
  &\geq \sup_{x\in K_n}(1+x^2)^{-p}|f_n(x)|=n\sup_{x\in K_n}\frac{(1+x^2)^n}{(1+x^2)^p}\geq n, \ {\rm for\ all }\ n\geq p.
  \end{align*}
  This means that the sequence $\{f_n\}_{n\in\N}$ cannot be bounded in the Banach space $\cO^m_p(\R)$ for all $p\in\N$ and hence in $\cO^m(\R)$, being  $\cO^m(\R)=\ind_{\stackrel{p}{\to}}\cO^m_p(\R)$  a regular (LB)-space.
\end{rem}

\subsection{The composition operator on $\cO_M(\R)$}

In \cite{GaJo} the symbols $\varphi\in C^\infty(\R)$ for  $\cS(\R)$ have been characterized. Precisely,  \cite[Theorem  2.3]{GaJo} states that 
	a function $\varphi\in C^\infty(\R)$ is a symbol for $\cS(\R)$ if, and only if, the following conditions are satisfied:
	\begin{enumerate}
		\item For all $j\in\N_0$ there exist $C,p>0$ such that, for every $x\in\R$
		\begin{equation*}\label{symb1}
		|\varphi^{(j)}(x)|\leq C(1+\varphi(x)^2)^p;
		\end{equation*}
		\item There exists $k>0$ such that 
		\begin{equation*}\label{symb2}
		|\varphi(x)|\geq |x|^{\frac{1}{k}}, \quad \forall \,|x|\geq k.
		\end{equation*}
	\end{enumerate}

%It follows that every symbol $\varphi\in C^\infty(\R)$ for $\cS(\R)$ is a \textit{semiproper map}, i.e., for every compact set $K\subset\R$ there is a compact
%set $L\subset\R$ such that $ \varphi(\R) \cap K\subset \varphi(L)$. We point out that every continuous function $\varphi\colon \R\to\R$ is a semiproper map if, and only if, $\varphi(\R)$ %is closed.

We now characterize the function $\varphi\in C^\infty(\R)$  such that $C_\varphi$ acts continuously from $\cO_M(\R)$ into itself. In order to do this, we first establish the following result.

\begin{thm}\label{thcomp} Let $m\in\N$ and let  $\varphi\in C^m(\R)$.
	Then the following properties are equivalent:
	\begin{enumerate}
		\item $C_\varphi$ acts continuously from $\cO^m(\R)$ into itself;
		\item  $\varphi\in\cO^m(\R)$.
	\end{enumerate} 
\end{thm}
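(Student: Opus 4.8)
The plan is to prove the two implications separately, the easy one being $(2)\Rightarrow(1)$ and the substantive one being $(1)\Rightarrow(2)$.

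For $(2)\Rightarrow(1)$, assume $\varphi\in\cO^m(\R)$, so there is $n_0\in\N$ with $\varphi\in\cO^m_{n_0}(\R)$; in particular $|\varphi^{(i)}(x)|\leq C(1+x^2)^{n_0}$ for $0\leq i\leq m$ and some $C>0$. Given $f\in\cO^m_n(\R)$, I would estimate the derivatives of $f\circ\varphi$ by the Fa\`a di Bruno formula: $(f\circ\varphi)^{(i)}$ is a finite sum of terms $f^{(k)}(\varphi(x))\prod_j\varphi^{(r_j)}(x)$ with $k\leq i\leq m$ and $\sum_j r_j=i$. Each factor $|\varphi^{(r_j)}(x)|$ is bounded by $C(1+x^2)^{n_0}$, and $|f^{(k)}(\varphi(x))|\leq |f|_{m,n}(1+\varphi(x)^2)^{n}\leq |f|_{m,n}(1+(C(1+x^2)^{n_0})^2)^n\leq C'|f|_{m,n}(1+x^2)^{2nn_0}$, using $|\varphi(x)|\leq C(1+x^2)^{n_0}$. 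Collecting the polynomial weights, one gets a bound $(1+x^2)^{-N}|(f\circ\varphi)^{(i)}(x)|\leq C''|f|_{m,n}$ for a suitable $N$ depending only on $n,n_0,m$, so $C_\varphi$ maps $\cO^m_n(\R)$ continuously into $\cO^m_N(\R)$; since $\cO^m(\R)=\ind_n\cO^m_n(\R)$, this shows $C_\varphi\in\cL(\cO^m(\R))$.

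For $(1)\Rightarrow(2)$, the first thing to note is that $\varphi=C_\varphi(\mathrm{id})$ would give the conclusion immediately if $\mathrm{id}\in\cO^m(\R)$ — which it is, since $|x^{(i)}|\leq 1+x^2$. So in fact $(1)\Rightarrow(2)$ is almost trivial once one observes that the identity function (more carefully, a $C^m$ function agreeing with $x$ for large $|x|$, or just $x$ itself) belongs to $\cO^m(\R)$: applying the continuous operator $C_\varphi$ to it yields $\varphi\in\cO^m(\R)$. The main point requiring a line of care is simply checking $\mathrm{id}\in\cO^m_1(\R)$, i.e. that $\sup_x\sup_{0\leq i\leq m}(1+x^2)^{-1}|(\mathrm{id})^{(i)}(x)|=\sup_x(1+x^2)^{-1}\max(|x|,1)<\infty$, which is clear.

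The genuinely delicate direction is therefore $(2)\Rightarrow(1)$, and within it the main obstacle is bookkeeping: controlling, uniformly in $f$ in a bounded set, the exponent $N$ of the polynomial weight produced by composing, so that it does not depend on $f$ but only on the fixed seminorm index $n$, on $m$, and on the fixed data of $\varphi$. The Fa\`a di Bruno expansion has finitely many terms (depending only on $m$), each of which is handled as above, so the final $N=N(n,n_0,m)$ is uniform; one should also invoke regularity of the (LB)-space $\cO^m(\R)$ (as in Proposition \ref{schurr}) to pass from boundedness of images of bounded sets to continuity of $C_\varphi$, or simply argue directly that $C_\varphi\colon\cO^m_n(\R)\to\cO^m_N(\R)$ is bounded linear for each $n$ and appeal to the universal property of the inductive limit.
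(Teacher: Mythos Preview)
Your proposal is correct and follows essentially the same approach as the paper: $(1)\Rightarrow(2)$ is obtained by applying $C_\varphi$ to the identity function $p_1(x)=x\in\cO^m(\R)$, and $(2)\Rightarrow(1)$ is proved via the Fa\`a di Bruno formula to show that $C_\varphi$ maps each $\cO^m_n(\R)$ continuously into some $\cO^m_{n'}(\R)$, exactly as in the paper. Note, however, that your opening sentence has the labels swapped --- as you yourself acknowledge later, the delicate direction is $(2)\Rightarrow(1)$ and the trivial one is $(1)\Rightarrow(2)$.
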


\begin{proof}
	$(1)\Rightarrow(2)$. Consider the function $p_1(x):=x$, for $x\in\R$, which belongs to $\cO^m(\R)$. Then $\varphi=C_\varphi p_1 \in \cO^m(\R)$. So, (2) is satisfied.	

	$(2)\Rightarrow(1)$. 
	%Let  $m\in\N$ be fixed.
	We claim that  for all $n\in\N$ there exists $n'\in\N$ such that $C_\varphi: \cO_n^m(\R)\to \cO_{n'}^m(\R)$ is continuous. Indeed, for  fixed $n\in\N$ and  $f\in \cO_n^m(\R)$, we observe  for every $x\in\R$ and $i=0,\dots,m$ that 
	\begin{align}\label{eq.derivate}
	&(C_\varphi f)^{(i)}(x)=\nonumber\\
	&=\sum\frac{i!}{k_1!k_2!\ldots k_n!}f^{(k)}(\varphi(x))\left(\frac{\varphi'(x)}{1!}\right)^{k_1}\left(\frac{\varphi''(x)}{2!}\right)^{k_2}\ldots \left(\frac{\varphi^{(i)}(x)}{i!}\right)^{k_i},
	\end{align}
	where the sum is extended over all $(k_1,k_2,\ldots,k_i)\in \N_0^i$ such that $k_1+2k_2+\ldots+ ik_i=i$ and $k_1+k_2+\ldots+k_i=k$ (observe that $k\leq i$). Since by assumption on $\varphi$, i.e., $\varphi\in \cO^m(\R)$, there exist $D>0$ and $p\in\N$ such that $|\varphi^{(j)}(x)|\leq D(1+x^2)^p$ for $x\in\R$ and $j=0,\ldots, m$ and hence $(1+\varphi(x)^2)\leq D'(1+x^2)^{2p}$ for $x\in\R$ and a suitable constant $D'>0$, from \eqref{eq.derivate} it follows for every  $x\in\R$ and $i=0,\ldots, m$ that 
	\begin{align*}
	|(C_\varphi f)^{(i)}(x)|&\leq\sum\frac{i!}{k_1!k_2!\ldots k_i!}|f^{(k)}(\varphi(x))|\left|\frac{\varphi'(x)}{1!}\right|^{k_1}\left|\frac{\varphi''(x)}{2!}\right|^{k_2}\ldots \left|\frac{\varphi^{(i)}(x)}{i!}\right|^{k_i}\\& \leq \sum\frac{i!}{k_1!k_2!\ldots k_i!}|f|_{m,n}(1+\varphi(x)^2)^n\frac{ D^k(1+x^2)^{p k}}{1!^{k_1}2!^{k_2}\ldots i!^{k_i}}\\&\leq C_i |f|_{m,n} (1+x^2)^{p(2n+i)}\leq C|f|_{m,n} (1+x^2)^{p(2n+m)},
	\end{align*}
	where $C_i:=(D')^{n}\sum\frac{i!}{k_1!k_2!\ldots k_i!}\frac{D^k}{1!^{k_1}2!^{k_2}\ldots i!^{k_i}}$ and $C:=\max_{i=0,\ldots,m}C_i$. Accordingly,  if  $n':=p(2n+m)\in\N$, we get that 
	\begin{align*}
	|C_\varphi f|_{m,n'}\leq C |f|_{m,n}.
	\end{align*}
	Since $n\in\N$ and  $f\in\cO_n^m(\R)$ are arbitrary, the claim is proved.
\end{proof}

As a consequence of the result above, we obtain the class of symbols for which $C_\varphi \in \cL(\cO_M(\R))$.

\begin{thm}\label{chasymom}
	Let $\varphi\in C^\infty(\R)$.
	Then the following properties are equivalent:
	\begin{enumerate}
		\item $C_\varphi$ acts continuously from $\cO_M(\R)$ into itself;
		\item $C_\varphi$ acts continuously from $\cO^m(\R)$ into itself for all $m\in\N$;
		\item  $\varphi\in\cO_M(\R)$.
	\end{enumerate} 
\end{thm}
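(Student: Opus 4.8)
The plan is to deduce this three-way equivalence from Theorem~\ref{thcomp} together with the explicit projective description $\cO_M(\R)=\proj_{\stackrel{m}{\leftarrow}}\cO^m(\R)$ and the fact that $\cO_M(\R)$ carries the projective limit topology. The implication $(1)\Rightarrow(3)$ is immediate by the same trick as in Theorem~\ref{thcomp}: the coordinate function $p_1(x)=x$ lies in $\cO_M(\R)$, so if $C_\varphi\in\cL(\cO_M(\R))$ then $\varphi=C_\varphi p_1\in\cO_M(\R)$. The implication $(3)\Rightarrow(2)$ is also essentially free: if $\varphi\in\cO_M(\R)$, then by definition $\varphi\in\cO^m(\R)$ for every $m\in\N$, and Theorem~\ref{thcomp} gives that $C_\varphi$ acts continuously on each $\cO^m(\R)$.

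The only implication requiring an argument is $(2)\Rightarrow(1)$, and here I would work directly with the projective limit. Recall that the canonical linking maps $\cO_M(\R)\to\cO^m(\R)$ are continuous (with dense range) and that a fundamental system of continuous seminorms on $\cO_M(\R)$ is given by the $p_{m,v}$ with $v\in\cS(\R)$, $m\in\N$; equivalently, the topology of $\cO_M(\R)$ is the initial topology with respect to the inclusions $\cO_M(\R)\hookrightarrow\cO^m(\R)$. First, I would check that $C_\varphi$ maps $\cO_M(\R)$ into itself: for $f\in\cO_M(\R)\subset\cO^m(\R)$, property $(2)$ gives $f\circ\varphi\in\cO^m(\R)$ for each $m$, hence $f\circ\varphi\in\cap_m\cO^m(\R)=\cO_M(\R)$ (one must also note $f\circ\varphi\in C^\infty(\R)$, which holds since $f,\varphi\in C^\infty(\R)$). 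Then continuity of $C_\varphi\colon\cO_M(\R)\to\cO_M(\R)$ follows because for each $m$ the composite $\cO_M(\R)\xrightarrow{C_\varphi}\cO_M(\R)\hookrightarrow\cO^m(\R)$ equals $\cO_M(\R)\hookrightarrow\cO^m(\R)\xrightarrow{C_\varphi}\cO^m(\R)$, which is continuous by hypothesis $(2)$; since $\cO_M(\R)$ has the projective limit topology, a linear map into it is continuous as soon as all its compositions with the linking maps into the steps $\cO^m(\R)$ are continuous.

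The step I expect to need the most care is the bookkeeping in $(2)\Rightarrow(1)$: one must be sure that the projective spectrum is set up so that "continuous into each step $\cO^m(\R)$" genuinely implies "continuous into $\cO_M(\R)$". This is the standard universal property of a projective limit, but it relies on the linking maps $\cO^{m+1}(\R)\to\cO^m(\R)$ being the natural inclusions and on $\cO_M(\R)$ being endowed with exactly the initial topology they induce — both of which are recorded in the preliminaries. No delicate estimate is needed here; the quantitative work was already done in Theorem~\ref{thcomp}, and the remaining argument is purely a matter of invoking the definition of the projective limit topology and the chain $\cO_M(\R)=\cap_{m}\cO^m(\R)$.
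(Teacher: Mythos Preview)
Your proposal is correct and follows essentially the same route as the paper: $(1)\Rightarrow(3)$ via $p_1$, the equivalence $(2)\Leftrightarrow(3)$ via Theorem~\ref{thcomp}, and $(2)\Rightarrow(1)$ via the projective limit description $\cO_M(\R)=\proj_{\stackrel{m}{\leftarrow}}\cO^m(\R)$. You give a bit more detail on the universal property of the projective limit than the paper does, but the argument is the same.
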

\begin{proof}
	$(1)\Rightarrow(3)$. Consider the function $p_1(x):=x$, for $x\in\R$, which belongs to $\cO_M(\R)$. Then $\varphi=C_\varphi p_1 \in \cO_M(\R)$. So, (3) is satisfied.

	$(2)\Leftrightarrow (3)$. Follows by Theorem \ref{thcomp}.

	$(2)\Rightarrow(1)$. If the composition operator $C_\varphi$ acts continuously from $\cO^m(\R)$ into itself for all $m\in\N$, taking into account of the fact that $\cO_M(\R)$ is the projective limit of the (LB)-spaces $\cO^m(\R)$, it follows that the operator $C_\varphi$ acts continuously from $\cO_M(\R)$ into itself.
	\end{proof}

\begin{rem}
	Let $\varphi \in \cO_M(\R)$. If $\varphi$ is not a constant function, then the continuous linear  operator $C_\varphi: \cO_M(\R)\to\cO_M(\R)$ is never compact. The result follows from \cite[Theorem 3.3]{GaJo}, after having observed that  $\cD(\R)\hookrightarrow \cO_M(\R) \hookrightarrow C^\infty(\R)$ continuously.
\end{rem}

\begin{rem} Let $\cO_C(\R):=\cup_{n=1}^\infty\cap_{m=1}^\infty\cO_{n}^m(\R)$ be the space of \textit{very slowly increasing functions} on $\R$, endowed with its natural lc-topology, i.e., $\cO_C(\R) =\ind_{\stackrel{n}{\to}}\proj_{\stackrel{m}{\leftarrow}}\cO_{n}^m(\R)$, which is a  complete (LF)-space. In particular, $\cO_C(\R)$ is continuously included in  $\cO_M(\R)$ and it is the predual of the space of convolutors of the spaces $\cS(\R)$ and $\cS'(\R)$.

 Let $\varphi\in C^\infty(\R)$. If $C_\varphi$ acts continuously on $\cO_C(\R)$, then $\varphi\in\cO_C(\R)$ since $\varphi=C_\varphi p_1$, where $p_1\in\cO_C(\R)$ is the identity function. But the converse is not true.  For example, let $\varphi(x)=x^2$ and $f(x)=\sin x$, for $x\in\R$. Then $\varphi, f\in \cO_C(\R)$. But $f\circ \varphi\not\in \cO_C(\R)$. Indeed, we have for every $x\in\R$ and  $i\in\N$, with $i$ even, that  
	\[
	(f\circ \varphi)^{(i)}(x)=P(x)\sin x^2+Q(x)\cos x^2,
	\] 	
	where $P$ is a polynomial of degree equal to $i$ and $Q$ is a polynomial of degree  less than $i$. So, $f\circ \varphi\not\in \cap_{m=1}^\infty \cO^m_n(\R)$ for all $n\in\N$. Otherwise, if $f\circ \varphi\in \cap_{m=1}^\infty \cO^m_{n_0}(\R)$ for some $n_0\in\N$, then 
	\[
	|(f\circ \varphi)|_{m,n_0}=\sup_{x\in\R}\sup_{j=0,\ldots, m}(1+x^2)^{-n_0}|(f\circ \varphi)^{j}(x)|<+\infty
	\] 
	for all $m\in\N$. But if we choose  $x^2_k:=\frac{\pi}{2}+2k\pi$, for all $k\in\N$ and $i:=2(n_0+1)$, we get for all $k\in\N$ that 
	\[
	(1+x_k^2)^{-n_0}|(f\circ \varphi)^{2(n_0+1)}(x)|=(1+x_k^2)^{-n_0}|P(x_k)|\geq c x_k^2
	\]
	for some suitable positive constant $c$. Therefore, $|(f\circ\varphi)|_{2(n_0+1),n_0}=+\infty$. A contradiction.
\end{rem}

\section{Dynamics of composition operators acting on lcHs' continuously included in   $C^m(\R)$ for some $m\in\N_0\cup\{\infty\}$}

Throught this section $X$ always denotes a separable lcHs. Let $T\in \cL(X)$. The operator $T$ is said to be  \textit{supercyclic} if  there exists $x\in X$ whose projective orbit under $T$ is dense in $X$, that is the set $ \{\lambda T^nx \colon  n\in\N_0, \lambda\in\C\}$ is dense in $X$. Such an $x$ is said to be a \textit{supercyclic vector} of $T$. 
The operator $T$ is said to be \textit{hypercyclic} if there exists $x\in X$ whose orbit under $T$ is dense in $X$, that is the set ${\rm orb}(x, T) := \{T^nx \colon  n\in\N_0\}$ is dense in $X$. Such an $x$ is said to be a \textit{hypercyclic vector} of $T$. No power bounded operator is hypercyclic, but can be supercyclic. Every hypercyclic operator is always supercyclic. The converse is not true in general.
The operator $T$ is said to be \textit{cyclic} if there exists $x\in X$ whose span of the orbit under $T$ is dense in $X$, that is the space ${\rm span}({\rm orb}(x, T))$ is dense in $X$. Such an $x$ is said to be a \textit{cyclic vector} of $T$. Every supercyclic operator is always cyclic. The converse is not true in general.
The operator $T$ is said to be \textit{topological transitive} if for every pair of non-empty open subsets $U$ and $V$ of $X$, there is $n\in\N$ such that $T^n(U)\cap V\neq \emptyset$. An hypercyclic operator is always topological transitive. Thanks to the Birkhoff-transitivity theorem, the converse holds for Fr\'echet spaces (see \cite[Theorem 1.2]{BM}).

We recall an important result that we use in the following. For the proof see \cite[Proposition 1.26]{BM}.

\begin{prop}\label{critk}
	Let $X$ be a separable lcHs and let $T\in\cL(X)$ be supercyclic. Then either $\sigma_p(T')=\emptyset$ or $\sigma_p(T')=\{\lambda\}$, for some $\lambda\neq0$. In the latter case, ${\rm Ker}(T'-\lambda)$ has dimension $1$ and ${\rm Ker}(T'-\lambda)^n={\rm Ker}(T'-\lambda)$ for all $n\in\N_0$. Moreover, there exists a (closed) $T$-invariant hyperplane $X_0\subset X$ such that $T_0:=\lambda^{-1}T_{|{X_0}}$ is hypercyclic on $X_0$.
\end{prop}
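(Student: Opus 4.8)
The plan is to exploit repeatedly the principle that a supercyclic vector cannot lie in a proper closed hyperplane ``detected'' by an eigenfunctional of $T'$. So I would first record the basic lemma: if $\phi\in X'\setminus\{0\}$ with $T'\phi=\mu\phi$ and $x$ is a supercyclic vector, then $\langle x,\phi\rangle\neq0$; otherwise $\langle\lambda T^nx,\phi\rangle=\lambda\mu^n\langle x,\phi\rangle=0$ for all $n,\lambda$, putting the dense projective orbit inside the proper closed hyperplane $\Ker\phi$, which is absurd. I would also note that (assuming $\dim X\geq2$, as is implicit in the statement) a supercyclic $T$ has dense range: since $\C x$ is closed (being finite-dimensional) and $\{\lambda T^nx:n\geq1,\lambda\in\C\}\subseteq\mathrm{Im}(T)$, density gives $X=\C x\cup\overline{\mathrm{Im}(T)}$, and a vector space is not the union of a line and a proper subspace, so $\overline{\mathrm{Im}(T)}=X$. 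Hence no nonzero functional kills $\mathrm{Im}(T)$, i.e. $0\notin\sigma_p(T')$.

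Next, to see $\sigma_p(T')$ has at most one element, suppose $\mu_1\neq\mu_2$ lie in $\sigma_p(T')$ --- both nonzero by the above --- with eigenvectors $\phi_1,\phi_2$ normalized so $\langle x,\phi_i\rangle=1$. The continuous linear surjection $P:=(\langle\cdot,\phi_1\rangle,\langle\cdot,\phi_2\rangle)\colon X\to\C^2$ maps the dense projective orbit onto a dense subset of $\C^2$; but $P(\lambda T^nx)=\lambda(\mu_1^n,\mu_2^n)$, so that subset lies in $\bigcup_{n\geq0}\C\cdot(\mu_1^n,\mu_2^n)$, a countable union of complex lines through the origin whose ``slopes'' $(\mu_2/\mu_1)^n$ form a nowhere dense subset of $\C$ (they lie on one circle if $|\mu_1|=|\mu_2|$, and escape to $0$ or $\infty$ otherwise), so this union omits a nonempty open set --- contradiction. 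Thus $\sigma_p(T')$ is $\emptyset$ or a singleton $\{\lambda\}$, $\lambda\neq0$. In the latter case $\Ker(T'-\lambda)$ is one-dimensional, since two independent eigenvectors normalized at $x$ would have a difference that is a nonzero eigenvector annihilating $x$; write $\Ker(T'-\lambda)=\C\phi_0$ with $\langle x,\phi_0\rangle=1$. Finally $\Ker(T'-\lambda)^n=\Ker(T'-\lambda)$ (for $n\geq1$) follows by induction once $n=2$ is settled, by the same method: a generalized eigenvector $\psi$ with $(T'-\lambda)\psi=c\phi_0\neq0$ gives $(T')^n\psi=\lambda^n\psi+n\lambda^{n-1}c\phi_0$, and $Q:=(\langle\cdot,\phi_0\rangle,\langle\cdot,\psi\rangle)\colon X\to\C^2$ maps the projective orbit into $\bigcup_n\C\cdot(1,\tau_n)$ with $\tau_n=\langle x,\psi\rangle+nc/\lambda$ in arithmetic progression --- once more a countable union of complex lines missing an open set.

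For the final assertion I would take $X_0:=\Ker\phi_0$, a closed hyperplane, which is $T$-invariant because $\langle Ty,\phi_0\rangle=\lambda\langle y,\phi_0\rangle$; set $T_0:=\lambda^{-1}T|_{X_0}$. That $T_0$ is \emph{supercyclic} comes from: $\mathrm{Im}(T-\lambda)\subseteq X_0$ with $\mathrm{Im}(T-\lambda)^{\perp}=\Ker(T'-\lambda)=X_0^{\perp}$, so $\overline{\mathrm{Im}(T-\lambda)}=X_0$ by the bipolar theorem; applying the continuous operator $T-\lambda$ to the dense projective orbit of $x$ and absorbing the factors $\lambda^n$ shows $u:=(T-\lambda)x\in X_0$ is a supercyclic vector for $T_0$. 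One also checks $\sigma_p((T_0)')=\emptyset$ by lifting an eigenvector of $(T_0)'$ to $\psi\in X'$, observing $T'\psi-\lambda\mu\psi\in\C\phi_0$, and correcting $\psi$ by a multiple of $\phi_0$ to produce either a $T'$-eigenvector for $\lambda\mu\neq\lambda$ (impossible) or, when $\mu=1$, a genuine generalized eigenvector (impossible by the previous step).

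The step I expect to be the real obstacle is upgrading supercyclicity of $T_0$ to \emph{hypercyclicity}, which genuinely needs more than bare supercyclicity of $T$. Writing $T^nx=\lambda^nx+r_n$ with $r_n\in X_0$, one finds $r_{n+1}=\lambda^nu+\lambda T_0r_n$, so $s_n:=\lambda^{-n}r_n=\lambda^{-1}\sum_{k=0}^{n-1}T_0^ku$ satisfies $s_{n+1}=T_0s_n+\lambda^{-1}u$, $s_0=0$; and density of the projective orbit of $x$ in $X\cong\C x\oplus X_0$ forces $\{s_n:n\geq0\}$ to be \emph{dense} in $X_0$ (equivalently, the orbit of $x$ itself is dense in the $T$-invariant affine hyperplane through $x$ parallel to $X_0$, on which $T$ acts as the affine map $v\mapsto T_0v+\lambda^{-1}u$). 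From the cocycle identity $T_0^ks_n=s_{n+k}-s_k$ one gets that $\{T_0^ks_n:n\geq0\}$ is dense in $X_0$ for every $k$, and a Baire-category / successive-approximation argument --- valid when $X$, hence $X_0$, is a Baire space, e.g. a Fréchet space, the setting where the proposition gets applied --- then produces $\xi\in X_0$ with dense $T_0$-orbit. So the crux is the lemma that an affine map on a suitable $X_0$ with a dense orbit must have hypercyclic linear part.
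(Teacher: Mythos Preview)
The paper does not prove this proposition; the sentence introducing it reads ``For the proof see \cite[Proposition~1.26]{BM}'', and in the sequel the paper only ever invokes the spectral assertions (in Propositions~\ref{nwsup} and~\ref{Psuper}). Your attempt is therefore a reconstruction of the Bayart--Matheron argument rather than something to be compared against an in-paper proof.

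The spectral part of your proposal is correct and is exactly the standard route: eigenfunctionals of $T'$ cannot vanish on a supercyclic vector; $0\notin\sigma_p(T')$ via dense range; two distinct eigenvalues, or a genuine Jordan block, are excluded by projecting to $\C^{2}$ and noting that a countable union of complex lines through the origin is nowhere dense. The identification $X_0=\Ker\phi_0$, the supercyclicity of $u=(T-\lambda)x$ for $T_0$ via $\overline{\mathrm{Im}(T-\lambda)}=X_0$, and the verification that $\sigma_p((T_0)')=\emptyset$ are also fine.

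You rightly isolate the upgrade to \emph{hypercyclicity} of $T_0$ as the genuine obstacle, and correctly observe that it does not follow from supercyclicity together with $\sigma_p((T_0)')=\emptyset$ alone. Your reduction to the affine statement ``a dense forward orbit for $v\mapsto T_0v+\lambda^{-1}u$ forces $T_0$ to be hypercyclic'' is the right reformulation and is how \cite{BM} proceeds. The remaining gap is that this lemma is not actually proved in your sketch: from the cocycle identity $T_0^{k}s_n=s_{n+k}-s_k$ you deduce that $\{T_0^{k}s_n:n\ge0\}$ is dense for each \emph{fixed} $k$, whereas hypercyclicity asks for density of $\{T_0^{k}s_n:k\ge0\}$ for some \emph{fixed} $n$, and it is precisely the passage between these two that you have left as a black box. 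Your caveat that a Baire hypothesis enters here is consistent with the $F$-space framework of \cite{BM}; since the paper never uses the ``Moreover'' clause, this residual subtlety is immaterial for its applications.
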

An operator $T\in \cL(X)$ is said to be \textit{mixing} if for every pair of non-empty open subsets $U$ and $V$ of $X$, there is $N\in\N$ such that $T^n(U)\cap V\neq \emptyset$ for all $n\geq N$. Every mixing operator on a separable Fr\'echet space $X$ is hypercyclic. 
In particular, in such a case it is easy to see that  if the operator $T$ is mixing, then it is also \textit{hereditarily hypercyclic}, i.e.,  for any infinite set $I\su\N$ the family $\{T^n\,:\, n \in I\}$  is \textit{universal} which means that  for some $x\in X$ the set $\{T^n x\,:\, n\in I\}$ is dense in $X$.

An operator $T$ is said to be \textit{weakly hyperercyclic} if there exists $x\in X$ whose orbit under $T$ is weakly dense in $X$ (for the weak topology $\sigma(X,X'))$. Such a vector $x$ is called a \textit {weakly hypercyclic vector} for $T$. One defines in the same way \textit{weakly supercyclic operators} and \textit{weakly supercyclic vectors}. We refer the reader to \cite{BM,GP} for more details.

In the following, if $X$ is a lcHs of functions defined on $\R$, we denote by $\delta_a$ the linear functional on $X$ which maps every function of $X$ to its value at $a\in \R$. We observe that if $X$ is continuously included in $C^m(\R)$, for some $m\in\N_0\cup\{\infty\}$, then $\delta_a^{(j)}\in X'$ for all $j\leq m$ and $a\in \R$, where $\delta_a^{(j)}$ is defined by $\delta_a^{(j)}(f):=f^{(j)}(a)$. A function $\varphi:\R\to \R$ is said to be {\em strongly runaway} when for any compact subset $K$ of $\R$ there exists $n_0\in\N$ such that $\varphi_n(K)\cap K=\emptyset$ for all $n\geq n_0$.  It is standard to show that for an increasing function  $\varphi:\R\to\R$  the property to be strongly runaway  is equivalent to the fact that  $\varphi$ has no fixed points (cf. \cite[Lemma 4.1]{Pr2} for a proof in the smooth case).

We now can prove the following result. 

\begin{prop}\label{nwsup}
	Let $X$ be a separable lcHs such that $X\hookrightarrow C^1(\R)$ and $\delta_a, \delta_a^{(1)}$ are linearly independent  in $X'$ for every  $a\in\R$. Let $\varphi\in C^1(\R)$ be a symbol for $X$. If $\varphi(a)=a$ for some $a\in\R$ or $\varphi'(a)=0$ for some $a\in\R$, then $C_\varphi: X\to X$ is not weakly supercyclic.
\end{prop}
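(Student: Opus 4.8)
The plan is to argue by contradiction, reducing everything to Proposition~\ref{critk} applied in the weak topology. So assume that $C_\varphi$ is weakly supercyclic; equivalently, $C_\varphi$ is supercyclic as an operator on $X_\sigma:=(X,\sigma(X,X'))$. Since $X$ is separable, a countable dense subset of $X$ remains dense for the coarser topology $\sigma(X,X')$, so $X_\sigma$ is again a separable lcHs; moreover $C_\varphi$, being continuous and linear on $X$, is continuous on $X_\sigma$, and $(X_\sigma)'=X'$, so the transpose of $C_\varphi$ on $X_\sigma$ is precisely $C_\varphi'$. Hence Proposition~\ref{critk} applies and tells us that $\sigma_p(C_\varphi')$ is either empty or a singleton $\{\lambda\}$ with $\lambda\neq0$, and in the latter case $\Ker(C_\varphi'-\lambda)$ is one-dimensional. (Recall that $\sigma_p(C_\varphi')$ is a purely algebraic object, independent of the topology placed on $X'$.)

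Next I would record how $C_\varphi'$ acts on the functionals $\delta_a,\delta_a^{(1)}$, which lie in $X'$ precisely because $X\hookrightarrow C^1(\R)$. For every $f\in X$ and every $a\in\R$, the chain rule gives
\[
(C_\varphi'\delta_a)(f)=f(\varphi(a)),\qquad (C_\varphi'\delta_a^{(1)})(f)=(f\circ\varphi)'(a)=f'(\varphi(a))\,\varphi'(a).
\]
Thus, if $\varphi(a)=a$ for some $a$, then $C_\varphi'\delta_a=\delta_a$ and $C_\varphi'\delta_a^{(1)}=\varphi'(a)\,\delta_a^{(1)}$; and if $\varphi'(a)=0$ for some $a$, then $C_\varphi'\delta_a^{(1)}=0$. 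In either situation the linear independence of $\delta_a$ and $\delta_a^{(1)}$ guarantees in particular that $\delta_a\neq0$ and $\delta_a^{(1)}\neq0$.

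From here the proof splits according to the hypothesis. If $\varphi'(a)=0$ for some $a$, then $0$ is an eigenvalue of $C_\varphi'$ with eigenvector $\delta_a^{(1)}\neq0$, contradicting the fact (from Proposition~\ref{critk}) that every point of $\sigma_p(C_\varphi')$ is nonzero. If instead $\varphi(a)=a$ for some $a$, then $1\in\sigma_p(C_\varphi')$, so Proposition~\ref{critk} forces $\sigma_p(C_\varphi')=\{1\}$ and $\dim\Ker(C_\varphi'-1)=1$; but then $\varphi'(a)\,\delta_a^{(1)}=C_\varphi'\delta_a^{(1)}$ with $\delta_a^{(1)}\neq0$ forces $\varphi'(a)\in\sigma_p(C_\varphi')=\{1\}$, hence $\varphi'(a)=1$ and $\delta_a^{(1)}\in\Ker(C_\varphi'-1)$, so that $\delta_a$ and $\delta_a^{(1)}$ are two linearly independent vectors of $\Ker(C_\varphi'-1)$ --- contradicting one-dimensionality. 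Both cases being impossible, $C_\varphi$ is not weakly supercyclic.

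The chain-rule computations and the passage from weak supercyclicity to supercyclicity on $X_\sigma$ are routine; the only point that genuinely needs a line of care is checking that Proposition~\ref{critk} is legitimately applicable in the weak setting (separability of $X_\sigma$ and weak-to-weak continuity of $C_\varphi$, together with the identification $(X_\sigma)'=X'$), after which the statement follows essentially formally from the eigenvector identities above. I do not anticipate a real obstacle.
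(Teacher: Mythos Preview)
Your proof is correct and follows essentially the same approach as the paper: both reduce to Proposition~\ref{critk} applied to $(X,\sigma(X,X'))$, using the eigenvector identities $C_\varphi'\delta_a=\delta_a$ and $C_\varphi'\delta_a^{(1)}=\varphi'(a)\delta_a^{(1)}$ when $\varphi(a)=a$. The only minor difference is in the case $\varphi'(a)=0$: you invoke Proposition~\ref{critk} via the observation $0\in\sigma_p(C_\varphi')$, whereas the paper argues directly that $C_\varphi(X)\subseteq\Ker\delta_a^{(1)}$, a proper closed hyperplane, so $C_\varphi$ is not even (weakly) cyclic --- a slightly more elementary route yielding a slightly stronger conclusion, but the two arguments are equivalent in spirit.
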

\begin{proof} Since $\varphi$ is a symbol for $X$, $C_\varphi\in \cL(X)$ and hence $C_\varphi\in \cL((X,\sigma(X,X'))$. 

	Suppose firstly that $\varphi'(a)=0$ for some $a\in\R$. This assumption implies that $C_{\varphi}(X)\subseteq {\rm Ker}\, \delta^{(1)}_a$. Then $C_\varphi$ cannot be even (weakly) cyclic.
	
	Now, suppose that $\varphi(a)=a$ for some $a\in\R$. This assumption implies that $C'_\varphi(\delta_a)=\delta_a$ and $C'_\varphi(\delta^{(1)}_a)=\varphi'(a)\delta^{(1)}_{\varphi(a)}=\varphi'(a)\delta^{(1)}_a$. Now we have to distinguish two cases. If $\varphi'(a)\neq 1$, then $C'_\varphi$ has two different eigenvalues. Applying Proposition \ref{critk} to $(X,\sigma(X,X'))$, we get that $C_\varphi$ is not weakly supercyclic. If $\varphi'(a)= 1$, then $\delta_a$ and $\delta^{(1)}_a$ are two different eigenvectors for $C_\varphi'$, thereby implying that ${\rm Ker}(C'_\varphi-I)$ has dimension greater or equal than two. Again, applying Proposition \ref{critk} to $(X,\sigma(X,X'))$, we get that $C_\varphi$ is not weakly supercyclic.
\end{proof}

\begin{cor}\label{wsimpmix}
	Let $X$ be a separable lcHs such that $X\hookrightarrow C^1(\R)$ and $\delta_a, \delta_a^{(1)}$ are linearly independent  in $X'$ for every  $a\in\R$. Let $\varphi\in C^1(\R)$ be a symbol for $X$.
	If $C_\varphi$ is weakly supercyclic on $X$, then $\varphi$ is strongly runaway and $\varphi'(x)>0$ for all $x\in\R$. 
\end{cor}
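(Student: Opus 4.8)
The plan is to deduce Corollary \ref{wsimpmix} from Proposition \ref{nwsup} by a contrapositive argument combined with the elementary structure theory of one-variable real functions. Assume $C_\varphi$ is weakly supercyclic on $X$. By Proposition \ref{nwsup}, applied with the same hypotheses on $X$, we know immediately that $\varphi$ has no fixed point (otherwise $C_\varphi$ would not be weakly supercyclic) and that $\varphi'(x)\neq 0$ for every $x\in\R$ (otherwise, again, $C_\varphi$ would not be weakly supercyclic). So from the outset we have $\varphi(x)\neq x$ for all $x$ and $\varphi'$ never vanishes.

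Next I would use continuity and connectedness of $\R$. Since $\varphi'\in C^0(\R)$ and $\varphi'$ never vanishes, by the intermediate value theorem $\varphi'$ has constant sign on $\R$; that is, either $\varphi'(x)>0$ for all $x$, or $\varphi'(x)<0$ for all $x$. We must rule out the second possibility. If $\varphi'(x)<0$ everywhere, then $\varphi$ is strictly decreasing on $\R$, hence a decreasing bijection of $\R$ onto its (open interval) image; combined with $\varphi(x)\neq x$ for all $x$, I claim this is impossible. Indeed consider $g(x):=\varphi(x)-x$, which is continuous, strictly decreasing (as a sum of the strictly decreasing $\varphi$ and the strictly decreasing $-x$... wait, $-x$ is decreasing so $g$ is strictly decreasing), and since $g$ is continuous and strictly decreasing with $g(x)\neq 0$ for all $x$, it would have to be everywhere positive or everywhere negative; but a strictly decreasing continuous function on $\R$ that is everywhere positive forces $\varphi(x)>x$ for all $x$, while a strictly decreasing $\varphi$ must eventually satisfy $\varphi(x)<x$ for large $x$ (since $\varphi$ decreasing means $\varphi(x)\le \varphi(0)$ for $x\ge 0$, so $\varphi(x)-x\le \varphi(0)-x\to-\infty$), a contradiction; symmetrically the everywhere-negative case fails for $x\to-\infty$. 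Hence $\varphi'(x)>0$ for all $x\in\R$, i.e. $\varphi$ is increasing.

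Finally, having established that $\varphi$ is increasing and has no fixed point, I invoke the remark recorded just before Proposition \ref{nwsup} (and \cite[Lemma 4.1]{Pr2}): for an increasing function $\varphi\colon\R\to\R$, being strongly runaway is equivalent to having no fixed point. Therefore $\varphi$ is strongly runaway, which completes the proof.

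The only point requiring a little care — and the ``main obstacle,'' though it is mild — is the argument that a fixed-point-free strictly decreasing continuous $\varphi$ cannot exist; this is really just an asymptotic comparison of $\varphi(x)$ with $x$ at $\pm\infty$ using monotonicity, and it is routine. Everything else is a direct citation of Proposition \ref{nwsup} and the strongly-runaway/no-fixed-point equivalence already stated in the text.
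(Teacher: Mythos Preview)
Your proof is correct and follows essentially the same approach as the paper's: invoke Proposition \ref{nwsup} to get $\varphi'\neq 0$ and no fixed points, rule out the decreasing case because a strictly decreasing continuous map of $\R$ must have a fixed point, and then use that an increasing map without fixed points is strongly runaway. The only cosmetic differences are that the paper dispatches the decreasing case with a one-line parenthetical while you spell out the asymptotic argument, and the paper argues directly that the orbits diverge rather than citing the equivalence; neither changes the substance.
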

\begin{proof}
	By Proposition \ref{nwsup} we can assume that $\varphi'(x)\neq 0$ for every $x\in\R$. Since   Proposition \ref{nwsup} also implies that the function  $\varphi$ cannot have fixed points, it follows that $\varphi$ is  increasing  ($\varphi$ decreasing would have fixed points). Accordingly, the function $\varphi(x)-x$ has constant sign. From this it follows that all the orbits of $\varphi$ are divergent. Hence $\varphi$  is strongly runaway.
\end{proof}

Observe that in  Proposition \ref{nwsup}, we ask that $X\hookrightarrow C^1(\R)$. We now prove the following result which concerns the weaker assumption $X\hookrightarrow C(\R)$.

\begin{prop}\label{Psuper} Let X be a separable lcHs  such that $X\hookrightarrow C(\R)$ with dense inclusion and $\{\delta_a:\ \in \R\}$ is a linearly independent subset in $X'$.
	 Let $\varphi\in C(\R)$ be a symbol for $X$.
	If $C_\varphi$ is supercyclic on $X$, then $\varphi$ is  increasing and without fixed points. 
\end{prop}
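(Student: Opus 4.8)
The plan is to mimic the structure of Proposition \ref{nwsup} and Corollary \ref{wsimpmix}, but now working only with the point evaluations $\delta_a$ since we no longer have $X\hookrightarrow C^1(\R)$. First I would rule out the existence of a fixed point: if $\varphi(a)=a$ for some $a\in\R$, then $C_\varphi'(\delta_a)=\delta_{\varphi(a)}=\delta_a$, so $1\in\sigma_p(C_\varphi')$. By Proposition \ref{critk} applied to the supercyclic operator $C_\varphi$ on $X$, the point spectrum $\sigma_p(C_\varphi')$ is either empty or a single nonzero eigenvalue with one-dimensional kernel; so $\sigma_p(C_\varphi')=\{1\}$ and $\Ker(C_\varphi'-I)$ is one-dimensional, spanned by $\delta_a$. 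Then I would look for a second independent eigenvector to get a contradiction: if $\varphi$ had a second fixed point $b\neq a$, then $\delta_b$ would also lie in $\Ker(C_\varphi'-I)$, and since $\{\delta_a,\delta_b\}$ is linearly independent by hypothesis, this contradicts one-dimensionality. Hence $\varphi$ has \emph{at most one} fixed point. To exclude the remaining case of exactly one fixed point $a$, I would use the hypercyclic hyperplane $X_0$ provided by Proposition \ref{critk}: $C_\varphi$ restricted (after scaling by $\lambda^{-1}=1$) to the closed invariant hyperplane $X_0=\Ker\delta_a$ is hypercyclic; but every $f\in X_0$ vanishes at $a$, hence so does $C_\varphi^n f=f\circ\varphi_n$, and the constant function... — here one must be careful since constants need not be in $X$; instead observe that hypercyclicity of $C_\varphi|_{X_0}$ on the Fréchet(-like) space $X_0$ would force topological transitivity, and one can separate two functions that differ near $a$, since the inclusion $X\hookrightarrow C(\R)$ is continuous and dense so $X_0$ is not contained in any single $\Ker\delta_c$; a cleaner route is to note that if $\varphi$ has a fixed point then $\varphi$ cannot be strongly runaway, and to derive a contradiction directly with (weak) hypercyclicity of $C_\varphi$ on $X$ via Kalmes' characterization. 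Actually the safest argument: a fixed point $a$ forces every orbit point $C_\varphi^n f$ to agree with $f$ at $a$, so $\mathrm{orb}(f,C_\varphi)\subseteq\{g\in X: g(a)=f(a)\}$, which is a proper closed affine subspace (since the inclusion in $C(\R)$ is dense, $\delta_a\neq 0$ on $X$), so no orbit is dense and $C_\varphi$ is not supercyclic either — wait, supercyclicity allows scaling, so one gets $\mathrm{orb}$ contained in $\{g: g(a)=\lambda f(a)\text{ for some }\lambda\}$, which is still not dense unless $f(a)=0$; taking $f$ with $f(a)\neq 0$ and using that a supercyclic vector can be chosen, or rather that the set of supercyclic vectors is dense, one finds a supercyclic vector $f$ with $f(a)\neq 0$, contradiction. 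So $\varphi$ has no fixed point.

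Next I would establish that $\varphi$ is increasing. Since $\varphi$ is continuous with no fixed point, the continuous function $x\mapsto\varphi(x)-x$ never vanishes, hence has constant sign; say $\varphi(x)>x$ for all $x$ (the other case is symmetric). I claim $\varphi$ must be strictly monotone. Suppose not; then by continuity $\varphi$ is not injective, so there are $x_1<x_2$ with $\varphi(x_1)=\varphi(x_2)=:c$. Then for any $f\in X$, $(C_\varphi f)(x_1)=f(c)=(C_\varphi f)(x_2)$, so $C_\varphi(X)\subseteq\Ker(\delta_{x_1}-\delta_{x_2})$, a proper closed hyperplane (proper because $\delta_{x_1}\neq\delta_{x_2}$ by linear independence of the $\delta_a$'s). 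Therefore $C_\varphi$ is not cyclic, a fortiori not supercyclic — contradiction. Hence $\varphi$ is injective and continuous, so strictly monotone. If $\varphi$ were decreasing, then $\varphi$ would have a fixed point (a decreasing continuous self-map of $\R$ has a unique fixed point, by the intermediate value theorem applied to $\varphi(x)-x$, which goes from $+\infty$ to $-\infty$), contradicting the previous paragraph. Therefore $\varphi$ is increasing.

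The main obstacle I anticipate is the "exactly one fixed point" case, since, unlike in Proposition \ref{nwsup} where the derivative functional $\delta_a^{(1)}$ supplies a second eigenvector (or a range restriction), here we only have the single eigenvector $\delta_a$ and Proposition \ref{critk} does not immediately give a contradiction. The resolution is the orbit-confinement argument sketched above: a fixed point $a$ of $\varphi$ confines every projective orbit $\{\lambda\, C_\varphi^n f : \lambda\in\C, n\in\N_0\}$ to $\{g\in X: g(a)=\mu f(a),\ \mu\in\C\}$, which is dense in $X$ only if it is all of $X$, i.e. only if $f(a)=0$; and since the set of supercyclic vectors of a supercyclic operator is dense in $X$ while $\{f: f(a)=0\}$ is a proper closed subspace (using density of $X$ in $C(\R)$ to see $\delta_a\ne0$), there exists a supercyclic vector $f$ with $f(a)\neq 0$ — contradiction. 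I would present the no-fixed-point part via this direct argument (it subsumes the one-fixed-point subcase and doesn't even need Proposition \ref{critk}), then derive strict monotonicity and increasingness as in the two short paragraphs above.
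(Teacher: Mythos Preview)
Your treatment of injectivity (if $\varphi(x_1)=\varphi(x_2)$ with $x_1\neq x_2$, then $C_\varphi(X)\subseteq\Ker(\delta_{x_1}-\delta_{x_2})$, so $C_\varphi$ is not even cyclic) and of the case of two or more fixed points (two independent eigenvectors of $C_\varphi'$ for the eigenvalue $1$, contradicting Proposition~\ref{critk}) are correct and essentially match the paper's proof.

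The gap is in the case of a \emph{single} fixed point $a$. Your ``orbit-confinement'' argument has the logic inverted. The set $\{g\in X:\ g(a)=\mu f(a)\ \text{for some}\ \mu\in\C\}$ equals the proper hyperplane $\Ker\delta_a$ when $f(a)=0$, but it equals \emph{all of} $X$ when $f(a)\neq 0$, since then every scalar is a multiple of $f(a)$. Thus a fixed point confines the projective orbit of $f$ only when $f(a)=0$; all this shows is that every supercyclic vector must satisfy $f(a)\neq 0$, which is no contradiction. Your concluding line, ``there exists a supercyclic vector $f$ with $f(a)\neq 0$ --- contradiction'', therefore proves nothing: a single nonzero eigenvalue of $C_\varphi'$ with one-dimensional eigenspace is perfectly compatible with supercyclicity, exactly as Proposition~\ref{critk} allows.

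The paper handles the unique-fixed-point case by a different mechanism. After reducing to $X=C(\R)$ via the dense inclusion and establishing injectivity, it takes the fixed point to be $0$, analyses the sign of $\varphi(x)-x$ on each side, and invokes \cite[Theorem~8(ii)]{BJM}: if some orbit $\{\varphi_n(x)\}_n$ converges to the fixed point, then $C_\varphi$ is not supercyclic on $C(\R)$. The remaining configuration ($\varphi(x)>x$ for $x>0$ and $\varphi(x)<x$ for $x<0$) forces $\varphi$ to be a homeomorphism, and one applies the same criterion to $C_{\varphi^{-1}}$, which is also supercyclic. The decreasing case is reduced to the increasing one via $\varphi_2$. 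Some dynamical input of this kind is needed; the single functional $\delta_a$ by itself cannot rule out supercyclicity.
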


\begin{proof}
		The density of the inclusion $X\hookrightarrow C(\R)$ permits us to consider only  the case $X=C(\R)$ in order to  prove that  $C_\varphi$ supercyclic implies $\varphi$   increasing and without fixed points.  
	
	If $\varphi$ is not injective, then there exist $a,b\in\R$ with  $a\neq b$ such that $\varphi(a)=\varphi(b)$. Then $C_\varphi(C(\R))\subseteq {\rm Ker}(\delta_a-\delta_b)$, thereby implying that $C_\varphi$ cannot be (even weakly) supercyclic.

	We  now suppose  that $\varphi$ has fixed points. If there are more than one, for instance $a,b\in\R$ with $a\not= b$, we have that $C'_\varphi(\delta_a)=\delta_a$ and $C'_\varphi(\delta_b)=\delta_b$. By Proposition \ref{critk} it follows that $C_\varphi$ cannot be (even weakly) supercyclic.
	
	Finally, we assume  that $\varphi$ has a unique fixed point, that without loss of generality we can suppose to be $0$. Since $\varphi$ is injective, the function $\varphi(x)-x$ has constant sign in $(-\infty,0)$ and in $(0,\infty)$. Suppose $\varphi$ to be increasing. If $\varphi(x)<x$ for all $x>0$, then $\varphi_n(x)\to 0$ as $n\to\infty$ for all $x>0$ and so $C_\varphi$ is not (even weakly) supercyclic by \cite[Theorem 8(ii)]{BJM}. Hence, $\varphi(x)>x$ for every $x>0$. In a similar way, we can show that $\varphi(x)<x$ for every $x<0$. Therefore, the function $\varphi$ is also surjective and $\varphi^{-1}(x)<x$ for every $x>0$ and $\varphi^{-1}(x)>x$ for every $x<0$.  Accordingly, $\varphi^{-1}\in C(\R)$. Since $C_\varphi: C(\R)\to C(\R)$ is supercyclic,  also $C_{\varphi^{-1}}: C(\R)\to C(\R)$ is supercyclic. Again we get a contradiction by \cite[Theorem 8(ii)]{BJM}. If $\varphi$ is not increasing, it must be decreasing since it is injective. The contradiction now follows taking into account of the fact that $C_{\varphi}: C(\R)\to C(\R)$ is supercyclic if, and only if, $C_{\varphi_2}: C(\R)\to C(\R)$ is supercyclic.
		\end{proof}

Combining our results with previous works of Bonet and Doma\'nski and Kalmes, we get the following result for dynamics of some composition operators. The proof of (i) below uses an unpublished argument due to Bonet and Doma\'nski which ensures that $C_\varphi$ is sequentially supercyclic in $\mathcal{A}(\R)$ for every analytic diffeomorphism  $\varphi$ in $\R$  without fixed points.

\begin{thm}\label{BDK} 
\begin{itemize}
	\item[(i)] If a composition operator $C_\varphi$ defined on $\mathcal{A}(\R)$ is weakly supercyclic, then $C_\varphi$ is mixing. If we assume in addition $\varphi$ to be surjective, then $C_\varphi$ is weakly supercyclic if, and only if, $C_\varphi$ is sequentially hypercyclic.
	\item[(ii)] A composition operator $C_\varphi$ defined on $C^{m}(\R)$ with $m\in \N\cup \{\infty\}$ is weakly supercyclic if, and only if, $C_\varphi$ is mixing.	
	\item[(iii)] A composition operator $C_\varphi$ defined on $C(\R)$ is supercyclic if, and only if, $C_\varphi$ is mixing.
\end{itemize}

	\end{thm}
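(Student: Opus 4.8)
\textbf{Proof plan for Theorem \ref{BDK}.}
The plan is to reduce all three parts to the structural dichotomy already obtained in Corollary \ref{wsimpmix} and Proposition \ref{Psuper}, namely that weak supercyclicity of $C_\varphi$ forces $\varphi$ to be an increasing (hence, in the $C^m$ or analytic setting, $C^m$-diffeomorphic) strongly runaway symbol with no fixed points, and then to invoke the known sufficient conditions for mixing/hypercyclicity due to Kalmes and to Bonet--Doma\'nski. For part (ii): if $C_\varphi$ is weakly supercyclic on $C^m(\R)$ with $m\in\N\cup\{\infty\}$, then the hypotheses of Corollary \ref{wsimpmix} are met (for $C^m(\R)$ the functionals $\delta_a$ and $\delta_a^{(1)}$ are plainly linearly independent and $C^m(\R)\hookrightarrow C^1(\R)$), so $\varphi$ is strongly runaway with $\varphi'>0$ everywhere; by Kalmes' characterization (\cite[Corollary 4.1, Corollary 4.2]{Kalmes2}), strongly runaway implies that $C_\varphi$ is mixing on $C^m(\R)$. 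The reverse implication is immediate since every mixing operator is hypercyclic, hence supercyclic, hence weakly supercyclic. Part (iii) is the same argument with $C^1$ replaced by $C$: Proposition \ref{Psuper} gives that supercyclicity of $C_\varphi$ on $C(\R)$ forces $\varphi$ increasing with no fixed points, i.e.\ strongly runaway, and Kalmes' result again yields that $C_\varphi$ is mixing; the converse is trivial.

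For part (i), the setting $\mathcal{A}(\R)$ requires slightly more care because $\mathcal{A}(\R)$ is not a Fr\'echet space, so one must be attentive to which notion (hypercyclic / sequentially hypercyclic / mixing) is available. First I would note that $\mathcal{A}(\R)\hookrightarrow C^1(\R)$ and that $\delta_a,\delta_a^{(1)}$ are linearly independent on $\mathcal{A}(\R)$, so Corollary \ref{wsimpmix} applies: weak supercyclicity of $C_\varphi$ forces $\varphi$ to be an increasing analytic map of $\R$ with $\varphi'>0$ and no fixed points — in particular a real-analytic injection with nonvanishing derivative, hence a real-analytic diffeomorphism onto its (open, connected) image. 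The key analytic input here, credited in the statement to an unpublished argument of Bonet and Doma\'nski, is that for every analytic diffeomorphism $\varphi$ of $\R$ without fixed points the operator $C_\varphi$ is sequentially supercyclic (indeed sequentially hypercyclic up to the surjectivity issue) on $\mathcal{A}(\R)$; combined with the corresponding mixing-type statement this upgrades weak supercyclicity to mixing. For the ``in addition'' clause, if $\varphi$ is moreover surjective then it is a genuine analytic diffeomorphism of $\R$, so both $\varphi$ and $\varphi^{-1}$ are admissible symbols, and the Bonet--Doma\'nski construction produces a genuine dense orbit along a subsequence, giving sequential hypercyclicity; the converse implication (sequentially hypercyclic $\Rightarrow$ weakly supercyclic) is trivial.

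The routine verifications — that $\delta_a,\delta_a^{(1)}$ are linearly independent on each of the spaces $C^m(\R)$, $m\in\N$, $C^\infty(\R)$ and $\mathcal{A}(\R)$, that these spaces are separable and continuously embed in $C^1(\R)$ (resp.\ $C(\R)$), and that a symbol $\varphi$ satisfying $\varphi'>0$ with no fixed points is strongly runaway — follow at once from the discussion preceding Proposition \ref{nwsup} and from Corollary \ref{wsimpmix}, so I would dispatch them in a sentence. The main obstacle is part (i): everything hinges on the non-self-contained Bonet--Doma\'nski argument for $\mathcal{A}(\R)$, i.e.\ on establishing sequential (hyper/super)cyclicity of $C_\varphi$ for a fixed-point-free analytic diffeomorphism, which cannot be obtained from Kalmes' $C^m$ results and which must be reproduced or carefully cited; handling the non-surjective case (where $\varphi^{-1}$ is only defined on a proper open subinterval, so one cannot symmetrize as in the proof of Proposition \ref{Psuper}) is the subtle point, and is precisely why the sharper ``if and only if with sequential hypercyclicity'' conclusion is stated only under the extra surjectivity hypothesis.
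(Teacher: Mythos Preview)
Your plan for parts (ii) and (iii) matches the paper exactly: Corollary \ref{wsimpmix} (respectively Proposition \ref{Psuper}) reduces to the strongly runaway condition, and then one cites \cite[Corollary 4.2]{Kalmes2} (respectively \cite[Corollary 4.1]{Kalmes2}) for mixing; the converse is trivial.

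For part (i) your structure is right but the attribution of the key inputs is muddled. The mixing conclusion does \emph{not} rely on the unpublished Bonet--Doma\'nski argument about sequential supercyclicity; it comes directly from the published \cite[Theorem 2.3]{BD2}, which already gives that $C_\varphi$ is mixing on $\mathcal{A}(\R)$ once $\varphi$ is runaway with $\varphi'\neq 0$ and no fixed points---this covers the non-surjective case without further work, so the obstacle you describe there is not actually present. For the surjective clause, the paper does not invoke a direct ``Bonet--Doma\'nski construction of a dense orbit''; instead it observes that a surjective analytic map with $\varphi'>0$ and no fixed points is an analytic diffeomorphism of $\R$, then uses the Abel-equation theorem of Belitskii--Lyubich \cite[Theorem 3.1]{BL} to conjugate $\varphi$ analytically to $\psi(x)=x+1$, and finally cites \cite[Theorem 3.6]{BD2} for the sequential hypercyclicity of $C_\psi$, which transfers to $C_\varphi$ by conjugacy. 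You are missing this Belitskii--Lyubich step, which is the actual mechanism behind the sequential hypercyclicity.
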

	
\begin{proof} (i). By Corollary \ref{wsimpmix}, if $C_\varphi$ is weakly supercyclic then $\varphi$ must be runaway, $\varphi'(x)\neq 0$ for all $x\in\R$, and $\varphi$ does not have any fixed point.
 By \cite[Theorem 2.3]{BD2} $C_\varphi$ is then mixing. If $\varphi$ is also surjective, then $\varphi$ is an analytic diffeomorphism, and therefore $\varphi$ is analytically conjugate to $\psi(x)=x+1$ by \cite[Theorem 3.1]{BL}. We conclude since $C_\psi$ is sequentially hypercyclic by \cite[Theorem 3.6]{BD2}.
 
   (ii) It follows by combining Corollary \ref{wsimpmix}  with \cite[Corollary 4.2]{Kalmes2}.
   
    Finally, (iii) is a consequence of  Proposition \ref{Psuper} and \cite[Corollary 4.1]{Kalmes2}.
\end{proof}

%We remark that Theorem \ref{BDK}(ii) applies to all the spaces of real functions defined by local properties considered by Kalmes in \cite{Kalmes} which are continuously injected in $C^1(\R)$, as f.i. the Fr\'echet space $C^\infty_P(X)$. 

%We do not know under the hypothesis of Corollary \ref{wsimpmix} if $C_\varphi$ is mixing when $\varphi$ is strongly runaway with $\varphi'(x)>0$ for all $x\in\R$. We present a positive example for translations.

\begin{prop}
	Let $\varphi(x)=x+d$, $d\neq 0$. The operator $C_\varphi$ is mixing on $\cO^m(\R)$ for any $m\in\N$, and  also on $\cO_M(\R)$.
\end{prop}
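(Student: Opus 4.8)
The plan is to check the definition of mixing directly, exploiting that $\varphi$ is a translation. Write $\varphi_n(x)=x+nd$ and $\varphi_{-n}(x)=x-nd$ for the iterates of $\varphi$ and of $\varphi^{-1}$; being polynomials, these functions lie in $\cO_M(\R)$, so by Theorem~\ref{thcomp} and Theorem~\ref{chasymom} each $C_{\varphi_n}$ and $C_{\varphi_{-n}}$ is continuous on $\cO^m(\R)$ for every $m\in\N$ and on $\cO_M(\R)$, with $C_\varphi^n=C_{\varphi_n}$, $C_\varphi^{-n}=C_{\varphi_{-n}}$ and $C_{\varphi_n}\circ C_{\varphi_{-n}}=C_{\varphi_{-n}\circ\varphi_n}=I$; in particular $C_\varphi$ is a topological isomorphism on each of these spaces.

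The computational core is a runaway observation: for every $f\in\cD(\R)$ the sequences $\{f(\cdot+nd)\}_n$ and $\{f(\cdot-nd)\}_n$ converge to $0$ in $\cO^m(\R)$ for each $m\in\N$ and in $\cO_M(\R)$. Indeed, the supports ${\rm supp}\,f\mp nd$ eventually avoid every compact subset of $\R$ because $d\neq 0$, so by Remark~\ref{convom} it suffices to see that these sequences are bounded; and since $(1+x^2)^{-1}\le 1$ on the translated support of $f$, one has $|f(\cdot\mp nd)|_{m,1}\le\sup_{0\le i\le m}\sup_{x\in\R}|f^{(i)}(x)|$ for all $n$, which is boundedness in $\cO^m_1(\R)$, hence in $\cO^m(\R)$, and, this bound being valid for every $m$ while $f$ is smooth, in $\cO_M(\R)$ as well.

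Granting this, I would run the standard transitivity scheme. Let $X$ denote $\cO^m(\R)$ (for a fixed $m\in\N$) or $\cO_M(\R)$, and recall that $\cD(\R)$ is dense in $X$: cutting a given element off by smooth plateau functions $\chi_R$ whose derivatives are bounded uniformly in $R$ produces convergence in $X$, and in the $\cO^m$-case one regularizes the resulting element of $\cD^m(\R)$ by convolution to land in $\cD(\R)$. Given nonempty open sets $U,V\subseteq X$, choose $u\in U\cap\cD(\R)$, $v\in V\cap\cD(\R)$ and a balanced neighborhood $W$ of $0$ with $u+W\subseteq U$ and $v+W\subseteq V$, and put $h_n:=u+C_{\varphi_{-n}}v=u+v(\cdot-nd)\in X$. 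By the runaway observation $h_n-u=v(\cdot-nd)\to 0$ in $X$, so $h_n\in U$ for large $n$, whereas $C_\varphi^n h_n=C_{\varphi_n}u+C_{\varphi_n}C_{\varphi_{-n}}v=u(\cdot+nd)+v$, so $C_\varphi^n h_n-v=u(\cdot+nd)\to 0$ in $X$ and $C_\varphi^n h_n\in V$ for large $n$; hence $C_\varphi^n(U)\cap V\neq\emptyset$ for all sufficiently large $n$, i.e.\ $C_\varphi$ is mixing.

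The only step demanding genuine care is the boundedness half of the runaway observation, i.e.\ the uniform control of $|f(\cdot\mp nd)|_{m,p}$ in $n$, and this is exactly where $\varphi$ being a translation, rather than an arbitrary runaway symbol, matters: Remark~\ref{convom} shows that generic compactly supported runaway sequences fail to be bounded in these spaces, while for a translation the polynomial weight $(1+x^2)^{-p}$ is $\le 1$ wherever the translated bump is supported, so the bound is immediate. The remaining ingredients, namely the isomorphism property, the density of $\cD(\R)$, and the transitivity scheme itself, are routine.
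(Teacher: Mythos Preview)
Your proof is correct and follows essentially the same approach as the paper: both hinge on the observation (via Remark~\ref{convom}) that for $f\in\cD(\R)$ the translated sequences $\{f\circ\varphi_{\pm n}\}_{n}$ are bounded and hence null in $\cO^m(\R)$ and $\cO_M(\R)$. The only cosmetic difference is that the paper then invokes Kitai's criterion \cite[Theorem~1.6]{BM} directly (with $\cD_1=\cD_2=\cD(\R)$), whereas you spell out the density of $\cD(\R)$ and run the $h_n=u+C_{\varphi_{-n}}v$ construction by hand.
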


\begin{proof} Using Remark \ref{convom}
	it is easy to show that, for every $f\in \cD(\R)$, both  sequences $\{f\circ \varphi_n\}_{n\in\N}$ and $\{f\circ \varphi_{-n}\}_{n\in\N}$ are convergent  to 0 in $\cO^m(\R)$ for any $m\in\N$ and hence, in $\cO_M(\R)$. Now, by applying \cite[Theorem 1.6]{BM} with $n_k=k$, $\cD_1=\cD_2=\cD(\R) $ and $S=C_\varphi$, we get that $C_\varphi$ is  mixing in $\cO^m(\R)$ for any $m\in\N$ and also in $\cO_M(\R)$.	
\end{proof}

The basic example of the translations gives us mixing operators on both spaces $\cO^m(\R)$, $m\in\N$, and $\cO_M(\R)$. This should be compared with \cite[Theorem 2.1]{FeGaJo}  and \cite[Corollary 2.2(1)]{FeGaJo}, which shows how the situation differs when  $\{f\circ \varphi_n\}_{n\in\N}$ is not convergent to $0$ when $f\in \cD(\R)$ and $\varphi(x)=x+d$. We do not know if $C_\varphi:\cO_M(\R)\to \cO_M(\R)$ or $C_\varphi:\cO^m(\R)\to \cO^m(\R)$ is mixing when $\varphi$ is strongly runaway and $\varphi'(x)>0$ for all $x\in\R$.

\section{Ergodic properties
	of composition operators acting on lcHs' continuously included
	in  $C^m(\R)$ for some $m\in\N_0\cup\{\infty\}$}

\label{sot}	
\subsection{Strong operator topology on operators on spaces of
	differentiable functions}
We start by proving that the superposition operator is continuous.
\begin{prop}
	\label{superposition}
	Let $m\in \N_0\cup\{\infty\}$ and let $f\in C^{m}(\R)$. Then the superposition operator $S_{f}:\ C^{m}(\R)\to C^{m}(\R)$, $\varphi\mapsto f\circ\varphi$, is continuous.
\end{prop}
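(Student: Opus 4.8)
The map $S_f$ is well defined by the chain rule; since $C^m(\R)$ is metrizable, the plan is to prove \emph{sequential} continuity, by induction on $m\in\N_0$, and then to deduce the case $m=\infty$ by a limiting argument. For the base case $m=0$ I would use the standard localization: fix $\varphi_0\in C(\R)$, $N\in\N$ and $\varepsilon>0$, set $M:=1+\sup_{|x|\le N}|\varphi_0(x)|$, and use that $f$ is uniformly continuous on the compact interval $[-M,M]$ to obtain $\delta\in(0,1)$ with $|f(s)-f(t)|<\varepsilon$ whenever $s,t\in[-M,M]$ and $|s-t|<\delta$. Then $\sup_{|x|\le N}|\varphi(x)-\varphi_0(x)|<\delta$ forces $\varphi(x)\in[-M,M]$ for all $|x|\le N$, whence $\sup_{|x|\le N}|f(\varphi(x))-f(\varphi_0(x))|\le\varepsilon$; thus $S_f\colon C(\R)\to C(\R)$ is continuous at $\varphi_0$, and $\varphi_0$ was arbitrary.

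For the inductive step take $m\ge1$ and assume the assertion for $m-1$ (for every $g\in C^{m-1}(\R)$). Write $q_{k,N}(\psi):=\sup_{|x|\le N}\sup_{0\le i\le k}|\psi^{(i)}(x)|$ for the canonical seminorms of $C^m(\R)$, and note that $\psi_n\to\psi$ in $C^m(\R)$ if and only if $\psi_n\to\psi$ in $C(\R)$ and $\psi_n'\to\psi'$ in $C^{m-1}(\R)$, because $q_{k,N}(\psi_n-\psi)=\max\{q_{0,N}(\psi_n-\psi),\,q_{k-1,N}(\psi_n'-\psi')\}$ for $1\le k\le m$. Now let $\varphi_n\to\varphi$ in $C^m(\R)$. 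Then $\varphi_n\to\varphi$ in $C(\R)$, so $f\circ\varphi_n\to f\circ\varphi$ in $C(\R)$ by the base case. Moreover $(f\circ\varphi_n)'=(f'\circ\varphi_n)\,\varphi_n'$; since $f'\in C^{m-1}(\R)$ and $\varphi_n\to\varphi$ in $C^{m-1}(\R)$, the inductive hypothesis yields $f'\circ\varphi_n\to f'\circ\varphi$ in $C^{m-1}(\R)$, while $\varphi_n'\to\varphi'$ in $C^{m-1}(\R)$ because differentiation is a continuous linear map $C^m(\R)\to C^{m-1}(\R)$. As pointwise multiplication is jointly continuous on $C^{m-1}(\R)$ (Leibniz' rule gives $q_{k,N}(uv)\le 2^k\,q_{k,N}(u)\,q_{k,N}(v)$, and $uv-u_0v_0=(u-u_0)(v-v_0)+(u-u_0)v_0+u_0(v-v_0)$), it follows that $(f\circ\varphi_n)'\to(f\circ\varphi)'$ in $C^{m-1}(\R)$. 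By the equivalence just recalled, $f\circ\varphi_n\to f\circ\varphi$ in $C^m(\R)$, completing the induction.

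Finally, if $m=\infty$ and $\varphi_n\to\varphi$ in $C^\infty(\R)$, then $\varphi_n\to\varphi$ in $C^k(\R)$ for every $k\in\N_0$; since $f\in C^k(\R)$ as well, the finite case gives $f\circ\varphi_n\to f\circ\varphi$ in $C^k(\R)$ for every $k$, i.e.\ in $C^\infty(\R)$. Hence $S_f$ is sequentially continuous, and therefore continuous, on the metrizable space $C^\infty(\R)$. The only genuinely delicate point I expect is the localization in the base case (and, via the chain rule, in each inductive step): one must ensure that the values $\varphi_n(x)$ with $|x|\le N$ stay inside a compact interval independent of $n$, which is exactly what makes the uniform continuity of $f$ (respectively of $f^{(j)}$) usable; the remaining ingredients are routine consequences of the chain rule and of the fact that each $C^k(\R)$ is a topological algebra. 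A more computational alternative would estimate $(f\circ\varphi)^{(i)}-(f\circ\varphi_0)^{(i)}$ directly through the expansion \eqref{eq.derivate}, telescoping the products; this also works, but with heavier bookkeeping.
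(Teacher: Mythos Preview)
Your proof is correct and shares the paper's overall scaffolding (sequential continuity, induction on $m$, base case via localization plus uniform continuity of $f$ on a compact interval). The inductive step, however, is organized differently: the paper expands $(f\circ\varphi_n)^{(i)}$ via the Fa\`{a} di Bruno formula \eqref{eq.derivate2} and checks convergence of each derivative separately in $C(\R)$, using only the base case and the continuity of the product in $C(\R)$; you instead use just the first-order chain rule $(f\circ\varphi)'=(f'\circ\varphi)\,\varphi'$, invoke the inductive hypothesis at level $m-1$ for $S_{f'}$, and exploit that $C^{m-1}(\R)$ is a topological algebra. Your route avoids the combinatorics of Fa\`{a} di Bruno and makes the induction do real work; the paper's route is precisely the ``more computational alternative'' you allude to at the end. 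You also handle the case $m=\infty$ explicitly, which the paper's proof leaves implicit in the induction.
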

\begin{proof}
	We prove the statement by induction on $m$. Let $f\in C(\R)$ and let $\{\varphi_n\}_{n\in\N}\subset C(\R)$ be a convergent sequence in $C(\R)$ to some $\varphi\in C(\R)$.  We will show that the sequence $\{f\circ \varphi_n\}_{n\in\N}$ is convergent to 
	$f\circ \varphi$ in the topology $\tau_c$ of $C(\R)$. To do this, we first observe that for all $k\in \N$ there exists $M\in\N$ such that $\varphi_n([-k,k])\subseteq [-M,M]$ for all $n\in\N$. Indeed, there exists $n_0\in\N$ such that $n\geq n_0$ implies $|\varphi_n(x)-\varphi(x)|\leq 1$ for each $x\in [-k,k]$. On the other hand, $\varphi([-k,k])$ and $\varphi_i([-k,k])$, $i=1,\ldots,n_0-1$, are compact subsets of $\R$ as $\varphi,\varphi_i\in C(\R)$, $i=1,\ldots,n_0-1$, and hence, $\varphi([-k,k])\subset [-M',M']$ and  $\varphi_i([-k,k])\subset [-M',M']$,  $i=1,\ldots,n_0-1$, for some $M'>0$. Therefore, $\varphi_n([-k,k])\subset [-M'-1,M'+1]$ for all $n\in\N$. 
	
	Since $f\in C(\R)$, $f$ is uniformly continuous in $[-M,M]$ and hence, for a fixed $\varepsilon>0$, there exists $\delta>0$ such that $|f(x)-f(y)|<\varepsilon$ for each $x,y\in[-M,M]$ with $|x-y|<\delta$. On the other hand, $\sup_{x\in [-k,k]}|\varphi_n(x)-\varphi(x)|\to 0$ for $n\to\infty$ and hence, there exists  $n_0\in\N$ such that $\sup_{x\in [-k,k]}|\varphi_n(x)-\varphi(x)|<\delta$ for all $n\geq n_0$. Since $\varphi_n([-k,k])\subset [-M,M]$ for all $n\in\N$,  we get that $|(f\circ \varphi_n)(x)-(f\circ\varphi)(x)|<\varepsilon$ for all $n\geq n_0$ and $x\in [-k,k]$.

	Assume now the hypothesis is  true for $m-1$. Let $f\in C^{m}(\R)$ and let $\{\varphi_n\}_{n\in\N}\subset C^{m}(\R)$ be a convergent sequence in $C^m(\R)$ to some $\varphi\in C^{m}(\R)$. 
	For all $n\in\N$, 
	for every $x\in\R$ and $i=0,\dots,m$, Fa\`{a} di Bruno formula gives:
	\begin{align}\label{eq.derivate2}
	&(S_f(\varphi_n) )^{(i)}(x)=\nonumber\\
	&=\sum\frac{i!}{k_1!k_2!\ldots k_n!}f^{(k)}(\varphi_n(x))\left(\frac{\varphi_n'(x)}{1!}\right)^{k_1}\left(\frac{\varphi_n''(x)}{2!}\right)^{k_2}\ldots \left(\frac{\varphi_n^{(i)}(x)}{i!}\right)^{k_i},
	\end{align}
	where the sum is extended over all $(k_1,k_2,\ldots,k_i)\in \N_0^i$ such that $k_1+2k_2+\ldots+ ik_i=i$ and $k_1+k_2+\ldots+k_i=k$. We get from \eqref{eq.derivate2}, the inductive hypothesis  and the continuity of the product in $C(\R)$ that $\{(S_f(\varphi_n) )^{(i)}\}_{n\in\N}$ is convergent to $(S_f(\varphi) )^{(i)}$ in $C(\R)$ for any $0\leq i\leq m$, thereby completing  the proof. 
	\end{proof}

\begin{cor}
	\label{corsup}
	Let $m\in \N_0\cup \{\infty\}$ and $X$ be a barrelled lcHs continuously included in $C^m(\R)$ and satisfying the following properties:
	\begin{itemize}
		\item[(i)] The identity $p_1(x)=x\in X$;
		\item[(ii)]  $X$ is closed under composition;
		\item[(iii)] A sequence $\{\varphi_n\}_{n\in\N}\subset X$ is convergent in $X$ to some  $\varphi\in X$ if, and only if, $\{\varphi_n\}_{n\in\N}$ is bounded in $X$ and $\lim_{n\to\infty} \varphi_n=\varphi$ in $C^m(\R)$. 
	\end{itemize}
	Let $\{\varphi_n\}_{n\in\N}\subset X$ be a sequence of symbols for $X$. Then the sequence $\{C_{\varphi_n}\}_{n\in\N}$ of composition operators  is convergent in $\cL_s(X)$ if, and only if, $\{\varphi_n\}_{n\in\N}$ is convergent in $X$ and $\{C_{\varphi_n}\}_{n\in\N}$ is an equicontinuous sequence in $\cL(X)$. 
\end{cor}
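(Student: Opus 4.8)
The plan is to establish the two implications separately; the equivalence rests on three standard mechanisms, one attached to each hypothesis on $X$: barrelledness furnishes equicontinuity through the uniform boundedness principle, condition (i) lets one test the operators against the single symbol $p_1$, and condition (iii) combined with Proposition~\ref{superposition} upgrades convergence in $C^m(\R)$ to convergence in $X$.

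First I would treat the direct implication. Suppose $\{C_{\varphi_n}\}_{n\in\N}$ is convergent in $\cL_s(X)$, say to some $T$. For each $f\in X$ the sequence $\{C_{\varphi_n}f\}_{n\in\N}$ converges, hence is bounded, so $\{C_{\varphi_n}\}_{n\in\N}$ is pointwise bounded; since $X$ is barrelled, the uniform boundedness principle gives that $\{C_{\varphi_n}\}_{n\in\N}$ is equicontinuous. For the convergence of the iterates, one evaluates the operators at $p_1\in X$ (using (i)): as $p_1\circ\varphi_n=\varphi_n$, we have $C_{\varphi_n}p_1=\varphi_n$, so $\{\varphi_n\}_{n\in\N}$ converges in $X$, namely to $Tp_1$.

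For the converse, assume $\{\varphi_n\}_{n\in\N}$ converges in $X$ to some $\varphi$ and $\{C_{\varphi_n}\}_{n\in\N}$ is equicontinuous. By (ii), the linear map $C_\varphi\colon f\mapsto f\circ\varphi$ sends $X$ into $X$; this is the natural candidate limit. Fix $f\in X$. By (iii), $\varphi_n\to\varphi$ in $C^m(\R)$, so Proposition~\ref{superposition} applied to $S_f$ gives $f\circ\varphi_n\to f\circ\varphi$ in $C^m(\R)$. On the other hand, an equicontinuous family of operators maps the bounded set $\{f\}$ to a bounded set, so $\{C_{\varphi_n}f\}_{n\in\N}=\{f\circ\varphi_n\}_{n\in\N}$ is bounded in $X$. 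Feeding these two facts back into (iii) yields $f\circ\varphi_n\to f\circ\varphi$ in $X$, i.e.\ $C_{\varphi_n}f\to C_\varphi f$. Since $f\in X$ was arbitrary, $\{C_{\varphi_n}\}_{n\in\N}$ converges to $C_\varphi$ in $\cL_s(X)$; being the pointwise limit of an equicontinuous sequence of continuous linear maps, $C_\varphi$ is itself in $\cL(X)$, so in particular $\varphi$ is a symbol for $X$.

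I do not expect a genuine obstacle. The only points needing a moment's care are the two routine facts invoked in the converse: that an equicontinuous set of operators carries the bounded singleton $\{f\}$ to a bounded set (which is precisely what makes condition (iii) applicable to $\{f\circ\varphi_n\}$), and that the pointwise limit of an equicontinuous sequence of continuous linear operators is again continuous and linear (which certifies that the limit is a bona fide composition operator whose symbol lies in $X$). The one ingredient that is slightly less mechanical is the appeal to Proposition~\ref{superposition}: it is exactly this continuity of the superposition operator on $C^m(\R)$ that allows condition (iii) to close the argument.
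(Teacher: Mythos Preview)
Your proof is correct and follows essentially the same approach as the paper: the forward implication uses barrelledness for equicontinuity and evaluation at $p_1$ for the convergence of $\{\varphi_n\}$, while the converse combines Proposition~\ref{superposition} with condition (iii) and equicontinuity exactly as in the paper's argument.
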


\begin{proof}
	If there exists $C\in \cL(X)$ such that $\{C_{\varphi_n}(f)\}_{n\in\N}$ is convergent to $C(f)$ for any $f\in X$, then  for $f=p_1$ we get that $\{\varphi_n\}_{n\in\N}$ converges in $X$ to $C(p_1)$. Since $X$ is barrelled,  we also obtain that  $\{C_{\varphi_n}\}_{n\in\N}$ is an equicontinuous sequence in $\cL(X)$. 

	The converse follows from Proposition \ref{superposition}. Indeed, if the sequence $\{\varphi_n\}_{n\in\N}$ is convergent in $X$ to some $\varphi\in X$, then by (iii) the sequence $\{\varphi_n\}_{n\in\N}$ is convergent in $C^m(\R)$ to  $\varphi$ and so, by Proposition \ref{superposition} we have that  $C_{\varphi_n}(f)=f\circ \varphi_n\to f\circ\varphi$ in $C^m(\R)$ as $n\to\infty$ for any fixed $f\in X$.  On the other hand,  the sequence $\{C_ {\varphi_n}(f)\}_{n\in\N}$ is bounded in $X$ for any fixed $f\in X$, being $\{C_{\varphi_n}\}_{n\in\N}$  an equicontinuous sequence in $\cL(X)$. Therefore, by (iii) we conclude that $C_{\varphi_n}(f)=f\circ \varphi_n\to f\circ\varphi$ in $X$ as $n\to\infty$ for any fixed $f\in X$. The equicontinuity of $\{C_{\varphi_n}\}_{n\in\N}$ implies that $C_\varphi\in\cL(X)$. So, $C_{\varphi_n}\to C_\varphi$ in $\cL_s(X)$ as $n\to\infty$.
\end{proof}

\begin{rem}
	\label{montel} If $X$ is a Montel lcHs space which is continuously included in  $C^m(\R)$, with $m\in\N_0\cup\{\infty\}$, then $X$ clearly satisfies condition (iii) of Corollary \ref{corsup}.
\end{rem}
\begin{rem}
	\label{stable}
	If the sequence $\{\varphi_n\}_{n\in\N}$ is bounded in $C^m(\R)$ for some $m\in \N_0\cup \{\infty\}$, then  for all $k\in\N$ there exists $M>0$ such that  $\sup_{|x|\leq k}|\varphi_n^{(j)}(x)|< M$  for all $0\leq j\leq m$ and  $n\in\N$, i.e., $\varphi_n^{(j)}([-k,k])\subseteq [-M,M]$ for all $0\leq j\leq m$ and  $n\in\N$.  Hence,  \cite[Theorem 3.13]{Kalmes} for the particular case of composition operators reads as follows: $C_\varphi$ is power bounded in $C^m(\R)$ if, and only if, $\{\varphi_n\}_{n\in\N}$ is bounded in $C^m(\R)$.
\end{rem}
\subsection{Polynomial symbols}

We collect some results on  mean ergodicity of $C_\varphi$. 

\begin{prop}\label{P.ME0} Let $X$ be  a lcHs  which is closed under composition, continuously inluded in $C^m(\R)$ for some $m\in\N_0\cup\{\infty\}$ and contains  the function $p_1(x):=x$, for  $x\in\R$. If $\varphi\in X$ is a symbol for $X$ and
	$C_\varphi$ is  mean ergodic on $X$, then the following properties are satisfied:
	\begin{enumerate}
		\item The  sequence $\{\varphi_{[n]}\}_{n\in\N}$ converges in $C^m(\R)$;
		\item The sequence $\{\frac{\varphi_{n}}{n}\}_{n\in\N}$ converges to $0$ in $C^m(\R)$.
	\end{enumerate}
	If $C_\varphi$ is power bounded on $X$, condition $(2)$ above is satisfied.
\end{prop}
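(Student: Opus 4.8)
The plan is to exploit the fact that evaluating $C_\varphi$ and its Cesàro means at the identity function $p_1 \in X$ transfers the ergodic behaviour of the operator into convergence statements about the iterates $\varphi_n$, and then to use the continuous inclusion $X \hookrightarrow C^m(\R)$ to land the conclusions in $C^m(\R)$. Concretely, first I observe that $C_\varphi^k p_1 = p_1 \circ \varphi_k = \varphi_k$ for every $k \in \N$ (using that $X$ is closed under composition and $p_1 \in X$, so each $\varphi_k \in X$), and hence the Cesàro means satisfy $(C_\varphi)_{[n]} p_1 = \frac{1}{n}\sum_{k=1}^n \varphi_k = \varphi_{[n]}$.

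For part (1): if $C_\varphi$ is mean ergodic on $X$, then by definition $\{(C_\varphi)_{[n]}\}_{n}$ converges in $\cL_s(X)$, so in particular $\{(C_\varphi)_{[n]} p_1\}_n = \{\varphi_{[n]}\}_n$ converges in $X$. Since $X \hookrightarrow C^m(\R)$ continuously, this sequence also converges in $C^m(\R)$, which is exactly (1).

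For part (2): I would deduce this from (1). Writing $\varphi_n/n$ in terms of consecutive Cesàro means, note that
\begin{equation*}
\frac{\varphi_n}{n} = \varphi_{[n]} - \frac{n-1}{n}\varphi_{[n-1]} = \varphi_{[n]} - \varphi_{[n-1]} + \frac{1}{n}\varphi_{[n-1]},
\end{equation*}
valid for $n \geq 2$. Since $\{\varphi_{[n]}\}_n$ converges in $C^m(\R)$ (by (1)), the difference $\varphi_{[n]} - \varphi_{[n-1]} \to 0$ in $C^m(\R)$; and a convergent sequence in a locally convex space is bounded, so $\frac{1}{n}\varphi_{[n-1]} \to 0$ in $C^m(\R)$ as well. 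Hence $\varphi_n/n \to 0$ in $C^m(\R)$, giving (2). Finally, if $C_\varphi$ is merely power bounded, then $\{C_\varphi^n\}_n$ is equicontinuous in $\cL(X)$, so $\{C_\varphi^n p_1\}_n = \{\varphi_n\}_n$ is a bounded subset of $X$, hence a bounded subset of $C^m(\R)$; dividing by $n$ forces $\varphi_n/n \to 0$ in $C^m(\R)$, which is (2).

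I do not anticipate a serious obstacle here: the only subtle point is making sure that $\varphi_k$ genuinely lies in $X$ so that the formal identities $C_\varphi^k p_1 = \varphi_k$ make sense inside $X$ (rather than only in $C^m(\R)$), but this is guaranteed by the hypotheses that $p_1 \in X$ and $X$ is closed under composition. Everything else is a routine transfer of convergence/boundedness along the continuous inclusion together with the elementary algebraic identity relating $\varphi_n/n$ to the Cesàro means.
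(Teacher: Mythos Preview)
Your proof is correct and follows essentially the same approach as the paper: evaluate the Ces\`aro means at $p_1$ to obtain convergence of $\{\varphi_{[n]}\}$ in $X$ (hence in $C^m(\R)$), use the identity $\varphi_n/n=\varphi_{[n]}-\frac{n-1}{n}\varphi_{[n-1]}$ for (2), and in the power bounded case use boundedness of $\{\varphi_n\}=\{C_\varphi^n p_1\}$ in $X$ to get $\varphi_n/n\to 0$ in $C^m(\R)$. Your extra decomposition $\varphi_{[n]}-\varphi_{[n-1]}+\frac{1}{n}\varphi_{[n-1]}$ merely spells out what the paper leaves implicit.
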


\begin{proof} Assume that $C_\varphi$ is mean ergodic on $X$. 
	
	(1) By assumption there exists $P\in X$ such that $(C_\varphi)_{[n]}\to P$ in $\cL_s(X)$. Since the function $p_1$ belongs to $X$, it follows that $(C_\varphi)_{[n]}(p_1)=\varphi_{[n]}\to Pp_1:=\psi$ in $X$. Hence,  $\{\varphi_{[n]}\}_{n\in\N}$ is also convergent in $C^m(\R)$ to $\psi$. Therefore,   condition (1) follows.

	(2) Since
	\[
	\frac{\varphi_{n}}{n}=\varphi_{[n]}-\frac{n-1}{n}\varphi_{[n-1]},\quad n\geq 2,
	\]
	the result immediately follows from part (1).
	
	Finally, if $C_\varphi$ is power bounded, then the sequence $\{C_{\varphi_n}(p_1)\}_{n\in\N}=\{\varphi_n\}_{n\in\N}$  is bounded in $X$ and hence, bounded in $C^m(\R)$, thereby implying that  (2) is satisfied. 
\end{proof}

\begin{thm}
	\label{E.PB}
		Let $\varphi$ be a polynomial and let $X\in\{\cO_M(\R), C^\infty(\R), \mathcal{A}(\R)\}\cup\{\cO^m(\R):\ m\in\N_0\}\cup\{C^m(\R):\ m\in\N_0\}$. Then the composition operator $C_\varphi$ is mean ergodic on $X$ if, and only if,  $C_\varphi$ is power bounded if, and only if, $\varphi(x)=ax+b$, with either $|a|<1$, $a=-1$ or $a=1$ and $b=0$. In case $|a|<1$, we even have that the sequence of iterates $\{C_{\varphi_n}\}_{n\in\N}$ converges in $\cL_s(X)$. 
\end{thm}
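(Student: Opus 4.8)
Plan of proof. The plan is to establish the chain of equivalences ``$C_\varphi$ is power bounded'' $\Leftrightarrow$ ``$\varphi$ has one of the three admissible affine forms'' $\Leftrightarrow$ ``$C_\varphi$ is mean ergodic'', together with the convergence of $\{C_{\varphi_n}\}_{n\in\N}$ in the contractive case (recall that every polynomial is a symbol for each of the listed spaces, by Theorems \ref{thcomp} and \ref{chasymom} and the obvious facts for $C^m(\R)$, $C^\infty(\R)$, $\mathcal{A}(\R)$). First I would dispose of $X=\mathcal{A}(\R)$: there the full equivalence (for an arbitrary symbol, \emph{a fortiori} for a polynomial) is already contained in \cite{BD}, and for a polynomial the condition ``$\varphi$ has a unique fixed point to which its iterates converge'' means exactly $\varphi(x)=ax+b$ with $|a|<1$, the remaining admissible symbols $\varphi=p_1$ and $\varphi(x)=-x+b$ satisfying $C_\varphi^2=I$. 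So from now on $X$ is one of the other spaces in the list.

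For the necessity of the form of $\varphi$, I would start from Proposition \ref{P.ME0}: if $C_\varphi$ is mean ergodic, or merely power bounded, then $\varphi_n/n\to 0$ in $C^m(\R)$, hence $\sup_{|x|\le k}|\varphi_n(x)|/n\to 0$ for every $k$. If $\deg\varphi\ge 2$ then $|\varphi(x)|/|x|\to\infty$ as $|x|\to\infty$, so there is $R>0$ with $|\varphi(x)|\ge 2|x|$ whenever $|x|\ge R$; iterating at $x_0=R+1$ gives $|\varphi_n(x_0)|\ge 2^n(R+1)$, whence $\sup_{|x|\le R+1}|\varphi_n(x)|/n\to\infty$, a contradiction. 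Thus $\varphi(x)=ax+b$, and I would then use the explicit iterates $\varphi_n(x)=a^nx+b_n$ (with $b_n=nb$ if $a=1$ and $b_n=b\frac{1-a^n}{1-a}$ otherwise): from $|\varphi_n(1)|+|\varphi_n(0)|\ge|\varphi_n(1)-\varphi_n(0)|=|a|^n$ it follows that $|a|^n/n\to 0$, so $|a|\le 1$, and since $a\in\R$ this means $|a|<1$ or $a=-1$; while if $a=1$ then $|\varphi_n(0)|/n=|b|$ forces $b=0$. This is precisely the list in the statement, the constant symbols being the case $a=0$.

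For the sufficiency and the last assertion I would treat the three cases separately. If $a=1,b=0$ then $C_\varphi=I$ is power bounded and mean ergodic, with $C_{\varphi_n}=I$. If $a=-1$ then $\varphi_2=p_1$, so $C_\varphi^2=I$: the finite family $\{C_\varphi^n\}=\{I,C_\varphi\}$ is equicontinuous, hence $C_\varphi$ is power bounded, and $(C_\varphi)_{[n]}\to\frac12(I+C_\varphi)$ in $\cL_s(X)$, so $C_\varphi$ is mean ergodic. For $|a|<1$, let $c:=b/(1-a)$ be the unique fixed point of $\varphi$; a direct computation gives $\varphi_n=c+a^n(p_1-c)$ for all $n$, where $p_1-c$ (the function $x\mapsto x-c$) belongs to $X$ since each listed space contains the constants. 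Hence $\{\varphi_n\}$ lies in the bounded set $\{c+t(p_1-c):|t|\le 1\}$ of $X$, and $\varphi_n\to c$ in $X$ because $a^n\to 0$. By property (*) -- valid for $C^m(\R)$, $m$ finite, by Remark \ref{stable}, for $C^\infty(\R)$ by \cite{Kalmes}, and for $\cO^m(\R)$ and $\cO_M(\R)$ by the last section of the paper -- boundedness of $\{\varphi_n\}$ in $X$ yields that $C_\varphi$ is power bounded, i.e. $\{C_{\varphi_n}\}_{n\in\N}$ is equicontinuous; since moreover $\varphi_n\to c$ in $X$ and each of these spaces fulfils the hypotheses of Corollary \ref{corsup} (condition (iii) is automatic for $C^m(\R)$, holds by Remark \ref{montel} for $C^\infty(\R)$, and by Remark \ref{convom} for $\cO^m(\R)$ and $\cO_M(\R)$), Corollary \ref{corsup} gives that $\{C_{\varphi_n}\}$ converges in $\cL_s(X)$, necessarily to $C_c\colon f\mapsto f(c)$. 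Finally, the Ces\`aro means of a convergent sequence converge to the same limit, so $(C_\varphi)_{[n]}\to C_c$ in $\cL_s(X)$ and $C_\varphi$ is mean ergodic; this also gives the last assertion of the statement.

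The main obstacle I anticipate is the uniform bookkeeping across this heterogeneous list of spaces: one has to make sure that property (*) and the hypotheses of Corollary \ref{corsup} are available in each case, noting in particular that for the non-Montel Fr\'echet spaces $C^m(\R)$ ($m<\infty$) condition (iii) of Corollary \ref{corsup} is nevertheless trivially satisfied, and that for the $(LB)$- and $(PLB)$-spaces $\cO^m(\R)$, $\cO_M(\R)$ one has to invoke their regularity (Proposition \ref{schurr} and Remark \ref{convom}). One must also check that $\{c+t(p_1-c):|t|\le 1\}$ is genuinely bounded in these spaces -- which it is, being the continuous affine image of a bounded set of scalars -- and keep the real-analytic case separated, since for $\mathcal{A}(\R)$ property (*) is not established here and the statement is inherited directly from \cite{BD}.
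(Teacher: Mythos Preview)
Your proposal is correct and follows essentially the same route as the paper's proof: both use Proposition~\ref{P.ME0} to rule out polynomials of degree $\ge 2$ and affine maps with $|a|>1$ or $(a,b)=(1,b\neq 0)$, dispose of the periodic cases $a=1,b=0$ and $a=-1$ trivially, and for $|a|<1$ show that $\{\varphi_n\}$ is bounded in $X$, invoke property~(*) (via Remark~\ref{stable}, \cite{Kalmes}, Theorems~\ref{T.PowerBm}--\ref{T.PowerB}) to get power boundedness, and then apply Corollary~\ref{corsup} for the convergence in $\cL_s(X)$. The only cosmetic differences are that you separate $\mathcal{A}(\R)$ at the outset (the paper cites \cite{BD} inline) and that you use the explicit formula $\varphi_n=c+a^n(p_1-c)$ where the paper conjugates by a translation to reduce to $b=0$.
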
	

\begin{proof} Assume that either the degree of $\varphi$ is greater or equal than two or $\varphi(x)=ax+b$ with $|a|>1$. Then there exists   $n_0\in \N$ such that $|x|\geq n_0$ implies 
	$|\varphi(x)|>|x| +1$. So, it follows that  $|\varphi_n(n_0)|\geq n_0+n$ for all $n\in\N$. Therefore, $C_\varphi$ is neither power bounded nor mean ergodic on $X$ by Proposition \ref{P.ME0}. If $\varphi(x)=x+b$, with $b\neq 0$, then $\varphi_n(0)=nb$, and again Proposition \ref{P.ME0} yields that $C_\varphi$ cannot be power bounded or mean ergodic on $X$.

	Let $\varphi(x):=ax+b$ for $x\in\R$ and some $a,b\in\R$. The case $a=1$, $b=0$ gives the identity. The case $a=-1$ gives $\varphi_2=\varphi$, and  hence, $C_{\varphi}$ is idempotent. In both cases the statement is trivial. Also it is trivial when $a=0$. Indeed, in this case $C_\varphi$ is the evaluation at $b$ and satisfies $C_{\varphi_n}=C_\varphi$ for all $n\in\N$.

	Let  $|a|<1$, with $a\neq 0$. In this case, for $\phi(x):=x+b/(a-1)$, and $\psi(x):=ax$, we have $C_\varphi=C_{\phi^{-1}\circ\psi\circ\phi}=C_{\phi}C_\psi C_{\phi^{-1}}$. Hence, power boundedness or mean ergodicity of  $C_\varphi$ is equivalent to that of $C_\psi$. Thus, we can assume $b=0$. In this case it is easy to show that the sequence $\{\varphi_n\}_{n\in\N}$ is convergent to the constant function $0$ in all the spaces we are considering and hence, bounded there. Accordingly, the operator $C_\varphi$ is power bounded in every space $X$ (see Theorem \ref{T.PowerB} for $X$ equals to $\cO_{M}(\R)$ or equals to $\cO^m(\R)$ with $m\in\N$; see Remark \ref{stable} for $X$ equals to $C^m(\R)$ with $m\in\N_0\cup\{\infty\}$; for $X=\mathcal{A}(\R)$ see \cite{BD}).  Since every space $X$ satisfies the assumptions (i)$\div$(iii) of Corollary \ref{corsup}, the conclusion  follows from Corollary \ref{corsup}.
\end{proof}

We point out that if $\varphi$ is a polynomial of even degree and with no fixed points, like $\varphi(x)=x^2+1$ for $x\in\R$, then the composition operator $C_\varphi$ is power bounded on $\cS(\R)$ (see \cite[Theorem 3.7]{FeGaJo}). {The key point to make the difference is that in $\cS(\R)$ the polynomials are symbols, but they do not belong to $\cS(\R)$.} We see below that in case of analytic functions we can go further than polynomial symbols.

\begin{rem}
Bonet and Domanski showed in \cite{BD} that a nontrivial composition operator $C_\varphi$ (i.e., $\varphi_2\neq p_1$)  defined on $\mathcal{A}(\R)$ is power bounded if, and only if, $\{\varphi_n\}_{n\in\N}$ is convergent in $\mathcal{A}(\R)$ to a (unique) fixed point $a$ of $\varphi$. Hence, from Corollary \ref{corsup} we get that such an operator satisfies that $\{C_{\varphi_n}\}_{n\in\N}$ is convergent in $\cL_b(\mathcal{A}(\R))$.
\end{rem}

\subsection{Monotonic symbols}

\begin{prop}\label{P.ME2} Let $X$ be a lcHs which is   closed under composition, continuously included in $ C(\R)$ and contains the function $p_1$. Let $\varphi\in X$ be a symbol for $X$ such that there exists $\lim_{n\to\infty}\varphi_n(x)$ for all $x\in\R$.  If  $C_\varphi$ is mean ergodic on $X$, then the set ${\rm Fix}(\varphi)$ of fixed points of $\varphi$ is a non empty closed interval of $\R$.
\end{prop}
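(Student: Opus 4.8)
The plan is to show that the pointwise limit $g(x):=\lim_{n\to\infty}\varphi_n(x)$ is a \emph{continuous} map of $\R$ onto ${\rm Fix}(\varphi)$; then ${\rm Fix}(\varphi)=g(\R)$ is the continuous image of the connected set $\R$, hence an interval, and one checks separately that it is closed and nonempty.

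First I would use mean ergodicity to pin down $g$. By Proposition~\ref{P.ME0}(1), applied with $m=0$ (the standing hypotheses on $X$ are exactly the ones required there), the Ces\`aro means $\varphi_{[n]}$ converge in $C(\R)$ to some $\psi\in C(\R)$; in particular $\varphi_{[n]}(x)\to\psi(x)\in\R$ for each fixed $x$. Since $\varphi_{[n]}(x)=\frac1n\sum_{k=1}^n\varphi_k(x)$ is the Ces\`aro mean of the convergent sequence $\{\varphi_k(x)\}_k$, its limit equals $\lim_k\varphi_k(x)=g(x)$. Hence $g=\psi$, so $g$ is (finite and) continuous on $\R$.

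Next I would identify $g(\R)$ with ${\rm Fix}(\varphi)$. Since $X\hookrightarrow C(\R)$, $\varphi$ is continuous, so letting $n\to\infty$ in $\varphi_{n+1}(x)=\varphi(\varphi_n(x))$ yields $\varphi(g(x))=g(x)$, i.e. $g(x)\in{\rm Fix}(\varphi)$ for every $x$; conversely, if $x\in{\rm Fix}(\varphi)$ then $\varphi_n(x)=x$ for all $n$, whence $g(x)=x$. Thus $g(\R)={\rm Fix}(\varphi)$. Being the image of the connected set $\R$ under the continuous map $g$, this set is connected, hence an interval; it is closed as the zero set $(\varphi-p_1)^{-1}(\{0\})$ of a continuous function, and nonempty since it contains $g(0)$. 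This proves the proposition.

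I do not expect a real obstacle: the only step that is not completely formal is the identification $g=\psi$, and this is just the elementary fact that Ces\`aro means of a convergent sequence converge to the same limit, used pointwise. Conceptually, the entire force of the mean-ergodicity hypothesis is that it promotes the a priori merely pointwise limit $g$ to a continuous function, which is exactly what makes ${\rm Fix}(\varphi)$ connected.
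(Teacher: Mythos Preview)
Your proof is correct and follows essentially the same route as the paper: apply Proposition~\ref{P.ME0}(1) to get $\varphi_{[n]}\to\psi$ in $C(\R)$, identify $\psi$ with the pointwise limit $g$ via Ces\`aro's theorem, and then use continuity of $\varphi$ to show ${\rm Fix}(\varphi)=\psi(\R)$, which is a nonempty closed interval. You are slightly more explicit than the paper in verifying the reverse inclusion ${\rm Fix}(\varphi)\subseteq g(\R)$ and in noting why ${\rm Fix}(\varphi)$ is closed, but the argument is the same.
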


\begin{proof} 
	Since $C_\varphi$ is mean ergodic on $X$, we can apply 
	Proposition \ref{P.ME0}(1) to get that the sequence $\{\varphi_{[n]}\}_{n\in\N}$ converges in $C(\R)$ to some function $\psi\in C(\R)$. Since there exists $\lim_{n\to\infty}\varphi_n(x)$ for every $x\in\R$,  by Ces\`aro's theorem we necessary have
	\[
	\psi(x)=\lim_{n\to\infty}\varphi_n(x)
	\]
	for every $x\in\R$. So, by the fact that $\varphi$ is a continuous function,  we also have for every $x\in\R$ that
	\[
	\psi(x)=\lim_{n\to\infty}\varphi_n(x)=\lim_{n\to\infty}\varphi(\varphi_{n-1}(x))=\varphi(\psi(x)).
	\]
	This means that  $\psi(x)\in {\rm Fix}(\varphi)$ for all $x\in\R$ and hence,   ${\rm Fix}(\varphi)={\rm Im}\psi$. Since ${\rm Fix}(\varphi)$ is always a closed set and    $\psi\in C(\R)$, it follows that ${\rm Fix}(\varphi)$  is a non empty closed interval of $\R$. 
\end{proof}

\begin{prop}\label{P.ME1} Let $X$ be a lcHs which is closed  under composition, continuously included in  $C(\R)$ and contains the function $p_1$. Let $\varphi\in X$ be a symbol for $X$.
	If there exist $\beta>0$  such that $\varphi$ is a non decreasing function on $[\beta,+\infty)$ (on $(-\infty,\beta]$, respectively),  and $\varphi(x)>x$  ($\varphi(x)<x$) for every $x\geq\beta$ ($x\leq \beta$, respectively), then $C_\varphi$ is not mean ergodic on $X$.
\end{prop}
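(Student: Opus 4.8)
The plan is to show that mean ergodicity of $C_\varphi$ forces, via Proposition \ref{P.ME0}(1), a convergence of the Ces\`aro averages $\varphi_{[n]}$ in $C(\R)$, and then to derive a contradiction by exhibiting a point $x_0\ge\beta$ (resp. $x_0\le\beta$) at which the iterates $\varphi_n(x_0)$ diverge to $+\infty$ (resp. $-\infty$) so fast that the averages $\varphi_{[n]}(x_0)$ cannot stay bounded. Concretely, assume $C_\varphi$ is mean ergodic; by Proposition \ref{P.ME0}(1) the sequence $\{\varphi_{[n]}\}_{n\in\N}$ converges in $C(\R)$, in particular it is pointwise bounded. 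First I would treat the case ``$\varphi$ non decreasing on $[\beta,+\infty)$ and $\varphi(x)>x$ there''; the other case is symmetric (replace $\varphi$ by $x\mapsto -\varphi(-x)$, or simply run the argument on $(-\infty,\beta]$).

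The key observation is a monotonicity-of-orbits argument. Fix any $x_0\ge\beta$. Since $\varphi(x_0)>x_0\ge\beta$, and $\varphi$ is non decreasing on $[\beta,+\infty)$, an induction gives $\varphi_{n+1}(x_0)=\varphi(\varphi_n(x_0))>\varphi_n(x_0)\ge\beta$ for all $n$, so $\{\varphi_n(x_0)\}_n$ is strictly increasing and bounded below by $\beta$; let $\ell:=\lim_n\varphi_n(x_0)\in(\beta,+\infty]$. If $\ell<+\infty$, then by continuity $\varphi(\ell)=\ell$, contradicting $\varphi(x)>x$ on $[\beta,+\infty)$. Hence $\varphi_n(x_0)\to+\infty$. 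But then $\varphi_{[n]}(x_0)=\frac1n\sum_{k=1}^n\varphi_k(x_0)\to+\infty$ as well (the Ces\`aro average of a sequence tending to $+\infty$ tends to $+\infty$), which contradicts the pointwise boundedness of $\{\varphi_{[n]}\}_n$ obtained above. Therefore $C_\varphi$ is not mean ergodic.

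The main obstacle — really the only delicate point — is making sure the monotonicity propagates along the entire orbit, i.e.\ that all iterates $\varphi_n(x_0)$ stay in $[\beta,+\infty)$ where the hypotheses apply; this is exactly what the inductive step $\varphi_n(x_0)\ge\beta\Rightarrow\varphi_{n+1}(x_0)=\varphi(\varphi_n(x_0))>\varphi_n(x_0)$ secures, using $\varphi(x)>x$ at the point $\varphi_n(x_0)\ge\beta$ and then $\varphi_n(x_0)\ge\beta$ again. One should also note that ``non decreasing'' (rather than ``increasing'') is enough here, since we never need strict monotonicity of $\varphi$ to run the induction — the strict increase of the orbit comes from $\varphi(x)>x$, not from injectivity of $\varphi$. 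For the symmetric case, with $\varphi$ non decreasing on $(-\infty,\beta]$ and $\varphi(x)<x$ for $x\le\beta$, the same induction shows that for any $x_0\le\beta$ the orbit $\{\varphi_n(x_0)\}_n$ is strictly decreasing, stays in $(-\infty,\beta]$, and hence diverges to $-\infty$ (no finite limit is possible, as it would be a fixed point below $\beta$), so $\varphi_{[n]}(x_0)\to-\infty$, again contradicting Proposition \ref{P.ME0}(1). This completes the proof in both cases.
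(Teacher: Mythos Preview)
Your argument is correct. The overall strategy matches the paper's --- invoke Proposition~\ref{P.ME0} and analyse the orbit of a point in $[\beta,+\infty)$ --- but the key step is handled differently. The paper works through Proposition~\ref{P.ME0}(2): it picks $\delta>0$ with $\varphi(\beta)>\beta+\delta$, asserts $\varphi_n(\beta)>\beta+n\delta$ for all $n$, and concludes that $\varphi_n(\beta)/n>\delta$ cannot tend to $0$. That linear-growth assertion, however, does not follow from the stated hypotheses alone: ``$\varphi$ non decreasing and $\varphi(x)>x$'' forces the orbit to be strictly increasing, but gives no uniform lower bound on the increments (for instance $\varphi(x)=x+1/(1+x)$ on $[0,\infty)$ satisfies all the hypotheses with $\beta=0$, yet $\varphi_n(0)\sim\sqrt{2n}$ and $\varphi_n(0)/n\to 0$). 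You sidestep this entirely by appealing to Proposition~\ref{P.ME0}(1) instead: you only need $\varphi_n(x_0)\to+\infty$, which your fixed-point argument delivers cleanly, and then the Ces\`aro means $\varphi_{[n]}(x_0)$ diverge as well, contradicting the convergence of $\{\varphi_{[n]}\}$ in $C(\R)$. Your route is thus both simpler and, as written, actually complete where the paper's is not.
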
 

\begin{proof} We consider only the case in which $\varphi$ is a non decreasing function on $[\beta,+\infty)$. In the other case the proof follows by arguing in a similar way.

	We first observe that  $\{\varphi_n(\beta)\}_{n\in\N}$  is a non decreasing sequence, but not a bounded sequence. Indeed, from $\varphi(\beta)>\beta$ follows that there exists $\delta>0$ such that $\varphi(\beta)>\beta +\delta$ and hence, $\varphi_n(\beta)>\beta +n\delta$ for all $n\in\N$. Accordingly, $\frac{\varphi_n(\beta)}{n}>\frac{\beta}{n}+\delta>\delta$ for all $n\in\N$. Hence, $\{\frac{\varphi_n(\beta)}{n}\}_{n\in\N}$ does not converge to $0$. So, by Proposition \ref{P.ME0} we can conclude that $C_\varphi$ is not mean ergodic on $X$.
\end{proof}

\begin{cor}\label{C.ME} Let $X$ be a  lcHs which is closed  under composition, continuously included in  $C^1(\R)$ and contains the function $p_1$.  Let $\varphi\in X$ be a symbol for $X$, $\varphi\neq p_1$.
	If $\varphi$ is a non decreasing function  and $C_\varphi$ is mean ergodic on $X$, then there is $a\in\R$ such that ${\rm Fix}(\varphi)=\{a\}$ and $\{\varphi_n\}_{n\in\N}$ converges  in $C(\R)$ to the constant function $\psi(x):=a$ for $x\in\R$.
\end{cor}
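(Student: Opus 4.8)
The plan is to study the pointwise iterates $\varphi_n(x)$ and to exploit that mean ergodicity forces the Ces\`aro means $\varphi_{[n]}$ to converge not merely in $C(\R)$ but in $C^1(\R)$; the $C^1$-regularity of the limit is exactly what rules out a large fixed-point set. First I would record two elementary facts. Since $\varphi$ is non-decreasing, so is every iterate $\varphi_n$; and for each fixed $x\in\R$ the sequence $\{\varphi_n(x)\}_n$ is monotone in $n$ — non-decreasing if $\varphi(x)\geq x$, non-increasing if $\varphi(x)\leq x$ — so $\psi(x):=\lim_n\varphi_n(x)$ exists in $[-\infty,+\infty]$.

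\textbf{$\psi$ is a $C^1$ retraction of $\R$ onto ${\rm Fix}(\varphi)$.} Next I would check $\psi$ is everywhere finite: if $\psi(x_0)=+\infty$ for some $x_0$ (the case $-\infty$ is symmetric), then $\varphi_k(x_0)\to+\infty$, so $\varphi_{[n]}(x_0)\to+\infty$ by Ces\`aro's theorem; but mean ergodicity of $C_\varphi$ gives, via Proposition \ref{P.ME0}(1) with $m=1$, that $\{\varphi_{[n]}\}_n$ converges in $C^1(\R)$, so $\{\varphi_{[n]}(x_0)\}_n$ converges, a contradiction. Hence $\psi\colon\R\to\R$, and since also $\varphi_{[n]}(x)\to\psi(x)$ pointwise by Ces\`aro's theorem, the $C^1(\R)$-limit of $\{\varphi_{[n]}\}_n$ must coincide with $\psi$; thus $\psi\in C^1(\R)$ and $\psi$ is non-decreasing (pointwise limit of non-decreasing functions). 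Continuity of $\varphi$ gives $\varphi(\psi(x))=\lim_n\varphi(\varphi_{n-1}(x))=\psi(x)$, so ${\rm Im}\,\psi\subseteq{\rm Fix}(\varphi)$, while $\psi(a)=a$ for every $a\in{\rm Fix}(\varphi)$; hence ${\rm Im}\,\psi={\rm Fix}(\varphi)$. (This recovers, with the extra information $\psi\in C^1(\R)$, the conclusion of Proposition \ref{P.ME2} that ${\rm Fix}(\varphi)$ is a non-empty closed interval.)

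\textbf{The fixed-point set is a single point.} This is the heart of the argument. Put $I:={\rm Fix}(\varphi)={\rm Im}\,\psi$, a non-empty closed interval. Since $\varphi\neq p_1$ we have $I\neq\R$, so $I$ has a finite endpoint; assume $c:=\inf I\in\R$ (the case $\sup I\in\R$ is symmetric). For $x<c$, $\psi(x)\in I$ forces $\psi(x)\geq c$, while $\psi$ non-decreasing forces $\psi(x)\leq\psi(c)=c$, so $\psi\equiv c$ on $(-\infty,c]$. If $I\neq\{c\}$, then $I\supseteq[c,c+\eta]$ for some $\eta>0$ and $\psi(x)=x$ on $[c,c+\eta]$ (as $\psi$ restricts to the identity on $I$), whence $\psi$ has left derivative $0$ and right derivative $1$ at $c$ — impossible for $\psi\in C^1(\R)$. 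Therefore $I=\{a\}$ for some $a\in\R$, i.e.\ ${\rm Fix}(\varphi)=\{a\}$ and $\psi$ is the constant function $a$, so $\varphi_n(x)\to a$ for every $x\in\R$.

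\textbf{Upgrade to convergence in $C(\R)$.} For each $k\in\N$ and $x\in[-k,k]$, monotonicity of $\varphi_n$ gives $\varphi_n(-k)\leq\varphi_n(x)\leq\varphi_n(k)$, and both ends tend to $a$, so $\sup_{|x|\leq k}|\varphi_n(x)-a|\to0$ for every $k$; hence $\{\varphi_n\}_n$ converges to the constant function $a$ in $C(\R)$, as claimed. I expect the only genuine obstacle to be the singleton claim in the previous paragraph: it rests essentially on the $C^1$-regularity of $\psi$, which is available precisely because $X\hookrightarrow C^1(\R)$ rather than merely $X\hookrightarrow C(\R)$ (through Proposition \ref{P.ME0}(1)); without it, the clamping retraction onto a nondegenerate interval would itself furnish a mean-ergodic composition operator.
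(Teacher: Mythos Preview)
Your proof is correct and follows essentially the same route as the paper's: both arguments use Proposition~\ref{P.ME0}(1) to obtain $C^1$-convergence of $\{\varphi_{[n]}\}_n$, identify the limit with the pointwise limit $\psi=\lim_n\varphi_n$, show $\psi$ is a retraction onto ${\rm Fix}(\varphi)$, and then exploit the $C^1$-regularity of $\psi$ at an endpoint of ${\rm Fix}(\varphi)$ to rule out a nondegenerate interval. Your presentation is slightly more streamlined in that you avoid the two appeals to Proposition~\ref{P.ME1} made in the paper (to get $\varphi(x)>x$ for $x<a$, and later $|\varphi(x)|\le|x|$), replacing them by the direct observations that $\psi$ is non-decreasing with image $I$, and that $\varphi_n(-k)\le\varphi_n(x)\le\varphi_n(k)$ for $|x|\le k$.
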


\begin{proof} 
	 Since $\varphi$ is a non decreasing function, there exists $\lim_{n\to\infty}\varphi_n(x)$ for all $x\in \R$. Indeed, for a fixed $x\in \R$, we have either $\varphi(x)\leq x$ or $\varphi(x)\geq x$ from which it follows that $\{\varphi_n(x)\}_{n\in\N}$ is either a non increasing  sequence or a non decreasing sequence, being $\varphi$  a non decreasing function.  Since $C_\varphi$ is mean ergodic, we can then apply Proposition \ref{P.ME2} to conclude that the sequence $\{\varphi_{[n]}\}_{n\in\N}$ converges in $X$ (and hence, in $C^1(\R)$) to some $\psi\in X$  and  the set ${\rm Fix}(\varphi)=\psi(\R)$ is a closed interval of $\R$. Since $\varphi$ is not the identity function on $\R$, necessarily $ {\rm Fix}(\varphi)$ is a proper subset of $\R$. Moreover,   from the proof of Proposition \ref{P.ME2} it also follows  that $\psi(x)=\lim_{n\to\infty}\varphi_n(x)$ for every $x\in\R$ and hence, the sequence $\{\varphi_n(x)\}_{n\in\N}$ is a convergent sequence for every $x\in\R$.

	Let assume ${\rm Fix}(\varphi)$ to be bounded below and let $a$ be the infimum of ${\rm Fix}(\varphi)$. Clearly, $a\in{\rm Fix}(\varphi)$. On the other hand, since $C_\varphi$ is mean ergodic,  Proposition \ref{P.ME1}  implies that $\varphi(x)>x$ for all $x<a$. Accordingly, taking into account that $\varphi$ is a non decreasing function, it follows for every $x<a$ that $\{\varphi_n(x)\}_{n\in\N}$ is a non decreasing sequence such that $\varphi_n(x)\leq \varphi_n(a)=a$ for all $n\in\N$. Hence,  $\psi(x)=\lim_n \varphi_n(x)\leq a$ for all $x< a$. But, we also have for every $x< a$ that $\psi(x)=\lim_{n\to\infty}\varphi(\varphi_{n-1}(x))=\varphi(\psi(x))$ and hence, $\psi(x)\in {\rm Fix}(\varphi)$, thereby implying that $\psi(x)\geq a$. So,  $\psi(x)=a$ for all $x<a$. If ${\rm Fix}(\varphi)$ is not a single point, then there is $\delta>0$ such that $[a,a+\delta)\subseteq {\rm Fix}(\varphi)$. For all $x\in [a,a+\delta)$ we have $\psi(x)=\lim_n \varphi_{[n]}(x)=x$. This contradicts the differentiability of $\psi$ at $a$.

	The proof of the case ${\rm Fix}(\varphi)$  bounded above is completely analogous.

	Now, we can assume that   $a=0$ without loss of generality. So,  Proposition \ref{P.ME1} yields $|\varphi(x)|\leq |x|$ for all $x\in\R$. From this we get immediately that $\lim_{n\to\infty}\varphi_n(x)=0$ uniformly on compact subsets of $\R$. 
\end{proof}

%\begin{proof}
%	 Assume that $C_\varphi$ is mean ergodic. Then Corollary \ref{C.ME} yields that ${\rm Fix}(\varphi)$ consists at one single point. We can assume without loss of generality that ${\rm Fix}(\varphi)=\{0\}$. From Corollary \ref{C.ME} it also follows that $\{\varphi_n\}_{n\in\N}$ converges to the identically null function in $C(\R)$. This implies via Proposition \ref{superposition} that $\{C_{\varphi_n}\}_{n\in\N}$ is convergent in $\cL_s(C(\R))$. Indeed, setting $\varphi(x):=0$ for all $x\in\R$, for a fixed $f\in C(\R)$, we have by Proposition \ref{superposition} that $f\circ\varphi_n\to f\circ\varphi$ in $C(\R)$ as $n\to\infty$. So, the conclusion follows. 
%	\end{proof} 

\begin{thm}
\label{onefixed}
Let $X$ be a Montel lcHs which is closed under composition, continuously included in $C^1(\R)$ and contains the function $p_1$. Assume that the following condition is also satisfied:
\begin{itemize}

\item[(*)]   The composition operator $C_\varphi: X\to X$ is power bounded if, and only if, $\{\varphi_n\}_{n\in\N}$ is bounded in $X$ for any $\varphi\in X$ symbol for $X$.
\end{itemize}
Let $\varphi\in X$ be monotonic, $\varphi_2\neq p_1$.  Then $C_\varphi$ is power bounded if, and only if, $\varphi$ has an attracting fixed point $a$ and $\{C_{\varphi_n}\}_{n\in\N}$ is convergent to $C_a$ in $\cL_s(X)$. 
\end{thm}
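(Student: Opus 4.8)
The plan is to prove the two implications separately; the converse is immediate and the direct one carries the content. For the \emph{converse}, suppose $\varphi$ has an attracting fixed point $a$ and $\{C_{\varphi_n}\}_n=\{C_\varphi^n\}_n$ converges to $C_a$ in $\cL_s(X)$. Then for each $f\in X$ the sequence $\{C_\varphi^n f\}_n$ is convergent, hence bounded in $X$, so $\{C_\varphi^n\}_n$ is pointwise bounded in $\cL(X)$; since a Montel space is barrelled, the Banach--Steinhaus theorem makes $\{C_\varphi^n\}_n$ equicontinuous, i.e.\ $C_\varphi$ power bounded. (This uses neither monotonicity nor the fixed point, only the convergence of the iterates.)

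For the \emph{direct implication}, assume $C_\varphi$ is power bounded. I would first reduce to $\varphi$ non-decreasing: if $\varphi$ is non-increasing, then $\varphi_2=\varphi\circ\varphi\in X$ is non-decreasing, is a symbol for $X$ with $C_{\varphi_2}=C_\varphi^2$ power bounded, and $(\varphi_2)_2=\varphi_4\neq p_1$ (a non-decreasing involution of $\R$ is the identity, so $\varphi_4=p_1$ would force $\varphi_2=p_1$); applying the non-decreasing case to $\varphi_2$ gives $\mathrm{Fix}(\varphi_2)=\{a\}$ with $\varphi_{2n}(x)\to a$ for all $x$, the unique fixed point of the continuous non-increasing $\varphi$ is then $a$, $\varphi_{2n+1}=\varphi\circ\varphi_{2n}\to a$ pointwise, and one finishes as in the non-decreasing case. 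So let $\varphi$ be non-decreasing. By $(*)$, $\{\varphi_n\}_n$ is bounded in $X$, hence relatively compact in $X$ ($X$ Montel) and bounded in $C(\R)$, so $\{\varphi_n(x)\}_n$ is bounded for each $x$; being monotone and bounded it converges, and I set $\psi(x):=\lim_n\varphi_n(x)$. Every cluster point of $\{\varphi_n\}_n$ in $X$ must equal $\psi$ pointwise (the inclusion $X\hookrightarrow C(\R)$ is continuous and injective), so relative compactness yields $\psi\in X$ and $\varphi_n\to\psi$ in $X$; in particular $\psi\in C^1(\R)$. Letting $n\to\infty$ in $\varphi_{n+1}=\varphi\circ\varphi_n$ gives $\psi=\varphi\circ\psi$, whence $\mathrm{Fix}(\varphi)=\psi(\R)$ is a nonempty closed interval, proper because $\varphi_2\neq p_1$ forces $\varphi\neq p_1$.

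The key step is that $\mathrm{Fix}(\varphi)$ is a single point. If not, it is a proper nondegenerate closed interval, hence has a finite endpoint; take $a=\inf\mathrm{Fix}(\varphi)$ (the case $a=\sup\mathrm{Fix}(\varphi)$ is symmetric). Since $\psi(x)=x$ for $x\in\mathrm{Fix}(\varphi)$, we get $\psi'(a)=1$. For $x<a$ we have $x\notin\mathrm{Fix}(\varphi)$, and $\varphi(x)<x$ is impossible (it would make $\{\varphi_n(x)\}_n$ non-increasing with $\psi(x)\le\varphi(x)<a$, whereas $\psi(x)\in\mathrm{Fix}(\varphi)\subseteq[a,+\infty)$); hence $\varphi(x)>x$, so $\{\varphi_n(x)\}_n$ is non-decreasing and bounded above by $\varphi_n(a)=a$, giving $\psi(x)\le a$, and $\psi(x)\in\mathrm{Fix}(\varphi)$ then forces $\psi(x)=a$. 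Thus $\psi\equiv a$ on $(-\infty,a)$, so $\psi'(a)=0$, contradicting $\psi'(a)=1$. Therefore $\mathrm{Fix}(\varphi)=\{a\}$, $\psi$ is the constant function $a\in X$, $\varphi(a)=a$, and $\varphi_n(x)\to a$ for every $x$, i.e.\ $a$ is an attracting fixed point. Finally, $\varphi_n\to\psi$ in $X$ and $\{C_{\varphi_n}\}_n=\{C_\varphi^n\}_n$ is equicontinuous ($C_\varphi$ power bounded); since $X$ is barrelled (Montel) and satisfies condition (iii) of Corollary \ref{corsup} by Remark \ref{montel}, that corollary gives convergence of $\{C_{\varphi_n}\}_n$ in $\cL_s(X)$, and by Proposition \ref{superposition} its limit is $f\mapsto f\circ\psi=C_a(f)$; hence $C_{\varphi_n}\to C_a$ in $\cL_s(X)$.

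The main obstacle is this single-point step: one must combine the monotone-squeeze estimate with the $C^1$-smoothness of $\psi$, and it is essential there that $\varphi_n\to\psi$ in the topology of $X$, not merely pointwise — which is exactly where the Montel hypothesis enters, to provide relative compactness of $\{\varphi_n\}_n$. A minor secondary point is the passage from non-increasing to non-decreasing $\varphi$, which rests on a monotone involution of $\R$ being the identity.
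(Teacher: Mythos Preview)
Your proof is correct. The paper's argument for the direct implication proceeds differently at one key point: it first invokes that on a Montel space every power bounded operator is (uniformly) mean ergodic, and then applies Corollary \ref{C.ME}, whose proof obtains the $C^1$-regularity of the limit $\psi$ from the convergence in $X$ of the Ces\`aro means $\varphi_{[n]}$ (which exists by mean ergodicity). You bypass mean ergodicity altogether: from $(*)$ you get $\{\varphi_n\}$ bounded in $X$, and Montel compactness together with the uniqueness of the pointwise limit gives $\varphi_n\to\psi$ in $X$ directly, so $\psi\in C^1(\R)$. After that, the decisive single-fixed-point step (monotone squeeze plus the clash between the one-sided derivatives of $\psi$ at the endpoint of $\mathrm{Fix}(\varphi)$) is the same in both arguments. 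Your route is a bit more self-contained, since it does not rely on the theorem ``power bounded $\Rightarrow$ mean ergodic'' for Montel spaces; the paper's route, in exchange, packages the fixed-point analysis into the separate Corollary \ref{C.ME}, making it reusable for mean ergodic (not only power bounded) operators. Your reduction from the non-increasing to the non-decreasing case, including the observation that $\varphi_4\neq p_1$ because a non-decreasing involution of $\R$ is the identity, is also correct and slightly more explicit than the paper's.
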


\begin{proof} We first observe that if $C_\varphi$ is power bounded, then the sequence $\{\varphi_n\}_{n\in\N}$ is  bounded in $X$, being $\varphi_n=C_{\varphi_n}p_1$ for all $n\in\N$. 
	
 Assume first that $\varphi$ is non decreasing.  Since $X$ is Montel and $C_\varphi $ is power bounded, $C_\varphi$ is also (uniformly) mean ergodic. Thus, by Corollary \ref{C.ME} the sequence $\{\varphi_n\}_{n\in\N}$ necessarily  converges in $C(\R)$ to a unique fixed point $a$. Now, the fact that $X$ is Montel together with (*) imply   that $\{\varphi_n\}_{n\in\N}$ is also convergent in $X$ to $a$. Therefore, we can apply   Corollary \ref{corsup} to conclude  the convergence of $\{C_{\varphi_n}\}_{n\in\N}$ to $C_a$ in $\cL_s(X)$, taking into account that $C_\varphi$ is power bounded. Vice versa, if $\{C_{\varphi_n}\}_{n\in\N}$ is convergent in $\cL_s(X)$ to $C_a$, then $C_\varphi$ is power bounded, and we conclude. 

Assume now that $\varphi$ is decreasing. Since $X$ is barrelled, by Banach-Steinhaus theorem we have  that any operator $T\in \cL(X)$ is power bounded if, and only if, $T^2$ is.  Accordingly, $C_\varphi$ is power bounded if, and only if, $C_{\varphi_2}$ is. Since $\varphi_2$ is increasing,  by the first part of the proof we get that $C_\varphi$ is power bounded if, and only if, $\varphi_2$ has only one fixed point $a$ and $\{C_{\varphi_{2n}}\}_{n\in\N}$ is convergent to $C_a$ in $\cL_s(X)$. It follows that also  $\varphi$ has only one fixed point $a$.  On the other hand,  the convergence of $\{C_{\varphi_{2n}}\}_{n\in\N}$ to $C_a$ in $\cL_s(X)$ implies the  convergence of $\{\varphi_{2n}\}_{n\in\N}$  to the constant function $a$ in in $C(\R)$. By applying this to any $\varphi([\alpha,\beta])$, with $\alpha,\beta\in\R$, we obtain the convergence  of $\{\varphi_n\}_{n\in\N}$ to the constant function $a$ in $C(\R)$. Taking account of the fact that $\{\varphi_{n}\}_{n\in\N}$ is a bounded sequence of $X$ as $C_\varphi$ is power bounded, we can apply  Corollary \ref{corsup} to conclude that $\{C_{\varphi_n}\}_{n\in\N}$ is convergent to $C_a$ in $\cL_s(X)$.  
\end{proof}

\section{Power boundedness of  composition operators on $\cO_M(\R)$ and on $\cO^m(\R)$}

%Let $\varphi\in \cO_M(\R)$. Then we can consider the operator
%\begin{equation*}
%C_{\varphi}:\cO_M(\R)\to\cO_M(\R)\; f\mapsto f\circ \varphi.
%\end{equation*}
%Observe that if $f\in\cO_M(\R)$ and $n\in\N$, 
%\[
%C_{\varphi}^n f=f\circ \varphi_ n,
%\]
%where $\varphi_n:= \varphi\circ \dots \circ \varphi$ denotes the $n$-th iteration of $\varphi$. Therefore, if $f\in\cO_M(\R)$ and $n\in\N$, then the $n$-th Ces\`aro mean of $C_{\varphi}$ is given by  
%\[
%(C_{\varphi})_{[n]}f(x)=\frac{1}{n}\sum_{m=1}^nC_{\varphi}^nf(x)=\frac{1}{n}\sum_{m=1}^n(f\circ \varphi_m)(x),\quad x\in\R.
%\]

%\begin{rem}
%	Let $\varphi\in \cO_M(\R)$. Since $C_\varphi$ acts continuously on $\cO_M(\R)$, then $\varphi_2=\varphi\circ\varphi\in \cO_M(\R)$. Due to Theorem \ref{chasymom}, also the composition operator $C_{\varphi_2}$ acts continuously on $\cO_M(\R)$. Iteratively, $\varphi_n \in \cO_M(\R)$ and $C_{\varphi_n}$ acts continuously on $\cO_M(\R)$ for all $n\in\N$. 
%\end{rem}

A characterization of the power boundedness of $C_\varphi$ when it acts continuously on $\cS(\R)$ has been proved in \cite{FeGaJo}. Precisely, in  \cite[Proposition 4]{FeGaJo} it has been shown that if $\varphi\in C^\infty(\R)$ is  a symbol for $\cS(\R)$, then the composition operator $C_\varphi$ is power bounded if, and only if, the following conditions are satisfied:
	\begin{enumerate}
		\item For all $j\in\N_0$ there exist $C,p>0$ such that, for every $x\in\R$ and $n\in\N$
		\begin{equation*}\label{symbpb1}
		|\varphi_n^{(j)}(x)|\leq C(1+\varphi_n(x)^2)^p;
		\end{equation*}
		\item There exists $k>0$ such that 
		\begin{equation*}\label{symbpb2}
		|\varphi_n(x)|\geq |x|^{\frac{1}{k}}, \quad \forall \,|x|\geq k, \, \forall n\in \N.
		\end{equation*}
	\end{enumerate}

In order to treat the case of the space $\cO_M(\R)$ we first establish the following result.

\begin{thm}\label{T.PowerBm} Let $m\in\N$ and $\varphi\in \cO^m(\R)$. Then the following properties are equivalent:
	\begin{enumerate}
		\item $C_{\varphi}$ is power bounded on $\cO^m(\R)$;
		\item The sequence $\{\varphi_n\}_{n\in\N}$ is bounded in $\cO^{m}(\R)$;
		\item For all $0\leq i\leq m$ there exist $C,p>0$ such that $|\varphi_n^{(i)}(x)|\leq C(1+x^2)^p$ for all $x\in\R$ and $n\in\N$.
	\end{enumerate}
\end{thm}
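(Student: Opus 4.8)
The plan is to dispose of the soft equivalences first and to concentrate the work on $(2)\Rightarrow(1)$.

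For $(2)\Leftrightarrow(3)$ I would only unwind the definitions. Since $\cO^m(\R)=\ind_{\stackrel{p}{\to}}\cO_p^m(\R)$ is a regular (LB)-space — the fact already used in the proof of Proposition~\ref{schurr} — a sequence is bounded in $\cO^m(\R)$ exactly when it is contained and bounded in some Banach step $\cO_p^m(\R)$, i.e.\ $\sup_n|\varphi_n|_{m,p}=:C<\infty$; reading off the norm $|\cdot|_{m,p}$ this says precisely that $|\varphi_n^{(i)}(x)|\le C(1+x^2)^p$ for all $i\le m$, $x\in\R$ and $n\in\N$, which is $(3)$ with a common exponent, and from the $i$-dependent exponents and constants of $(3)$ one recovers a common pair by taking maxima, using $1+x^2\ge1$. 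For $(1)\Rightarrow(2)$ I would test power boundedness on the identity function: $p_1\in\cO^m(\R)$, $C_{\varphi_n}p_1=\varphi_n$, and an equicontinuous family of operators carries the bounded singleton $\{p_1\}$ to a bounded set, so $\{\varphi_n\}_n$ is bounded in $\cO^m(\R)$.

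The heart of the matter is $(2)\Rightarrow(1)$, and here I would run a uniform-in-$n$ copy of the estimate already carried out for $(2)\Rightarrow(1)$ in Theorem~\ref{thcomp}. Since $\varphi\in\cO^m(\R)$, Theorem~\ref{thcomp} gives $C_\varphi\in\cL(\cO^m(\R))$, hence $C_{\varphi_n}=C_\varphi^n\in\cL(\cO^m(\R))$ for every $n$. By $(2)\Leftrightarrow(3)$ there are $D>0$ and $p\in\N$ with $|\varphi_n^{(j)}(x)|\le D(1+x^2)^p$ for all $j\le m$, $x\in\R$, $n\in\N$, whence also $1+\varphi_n(x)^2\le D'(1+x^2)^{2p}$ uniformly in $n$, for a suitable $D'>0$. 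Fixing $f\in\cO^m(\R)$ and choosing $n_0$ with $f\in\cO_{n_0}^m(\R)$, I would substitute $\varphi_n$ into the Fa\`{a} di Bruno expansion \eqref{eq.derivate} and bound each summand exactly as in the proof of Theorem~\ref{thcomp}, obtaining $|(C_{\varphi_n}f)^{(i)}(x)|\le C\,|f|_{m,n_0}\,(1+x^2)^{p(2n_0+m)}$ for all $i\le m$, $x\in\R$ and $n\in\N$, with $C$ depending only on $m,n_0,D,D'$. Setting $n':=p(2n_0+m)$ this yields $C_{\varphi_n}f\in\cO_{n'}^m(\R)$ with $\sup_n|C_{\varphi_n}f|_{m,n'}\le C|f|_{m,n_0}$, so $\{C_{\varphi_n}f\}_n$ is bounded in $\cO^m(\R)$ for each $f$. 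Thus $\{C_{\varphi_n}\}_n$ is a simply bounded family in $\cL(\cO^m(\R))$, and since $\cO^m(\R)$ is barrelled — being a complete (LB)-space — the Banach--Steinhaus theorem upgrades simple boundedness to equicontinuity, i.e.\ to power boundedness of $C_\varphi$.

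I do not expect a genuine obstacle. The one thing to watch is the bookkeeping in that displayed estimate: every constant generated along the Fa\`{a} di Bruno expansion must stay independent of the iterate index $n$, which is exactly what the uniform polynomial control of the $\varphi_n^{(j)}$ coming from $(2)$ provides, and the combinatorics of \eqref{eq.derivate} (finitely many multi-indices $(k_1,\dots,k_i)$ with $k_1+\cdots+k_i=k\le i\le m$) is literally the one-step combinatorics, so no new analytic input is needed. Regularity of the (LB)-space $\cO^m(\R)$ and its barrelledness are standard and already used elsewhere in the paper.
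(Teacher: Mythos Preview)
Your proposal is correct and essentially identical to the paper's own proof: the paper also handles $(1)\Rightarrow(2)$ by evaluating at $p_1$, deduces $(2)\Leftrightarrow(3)$ from regularity of the (LB)-space, and proves $(2)\Rightarrow(1)$ by running the Fa\`a di Bruno estimate of Theorem~\ref{thcomp} with $\varphi_n$ in place of $\varphi$ (obtaining the same bound $|C_\varphi^n f|_{m,r'}\le C|f|_{m,r}$ with $r'=p(2r+m)$) and then invoking Banach--Steinhaus on the barrelled space $\cO^m(\R)$. There is nothing to add.
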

\begin{proof} 
	(1)$\Rightarrow$(2). The operator $C_{\varphi}$ is power bounded on $\cO^m(\R)$. Accordingly, if we consider the function $p_1(x):=x$, for $x\in \R$, which belongs to $\cO^m(\R)$, then by assumption the sequence $\{C_{\varphi}^n p_1\}_{n\in\N}=\{\varphi_n\}_{n\in\N}$ is  bounded in $\cO^m(\R)$.
	
	(2)$\Rightarrow$(1).   We claim that for all $r\in\N$ there exists $r'\in\N$ such that $C_\varphi\colon \cO^m_r(\R)\to \cO_{r'}^m(\R)$ is power bounded, thereby obtaing that  the sequence $\{f\circ\varphi_n\}_{n\in\N}$ is bounded in $\cO_{r'}^m(\R)$ for all $f\in \cO_{r}^m(\R)$.
	Indeed, for fixed $r\in\N$ and $f\in \cO^m_r(\R)$, we observe for every $x\in\R$  and $n\in\N$   that 
	\[
	(C_\varphi^nf)^{(i)}(x)=\sum\frac{i!}{k_1!k_2!\ldots k_i!}f^{(k)}(\varphi_n(x))\left(\frac{\varphi_n'(x)}{1!}\right)^{k_1}\left(\frac{\varphi_n''(x)}{2!}\right)^{k_1}\ldots \left(\frac{\varphi_n^{(i)}(x)}{i!}\right)^{k_i}, \]
	where the sum is extended over all $(k_1,k_2,\ldots, k_i)\in \N_0^i$ such that $k_1+2k_2+\ldots +ik_i=i$, with $k=k_1+\ldots k_i$. Since by assumption on $\{\varphi_n\}_{n\in\N}$ (i.e., $\{\varphi_n\}_{n\in\N}$ is a bounded sequence of $\cO^m(\R)$), there exist $D>0$ and $p\in\N$ such that $|\varphi_n^{(j)}(x)|\leq D(1+x^2)^p$ for every $x\in\R$, $j=0,\ldots,m$ and $n\in\N$, and hence, $(1+\varphi_n(x)^2)\leq D' (1+x^2)^{2p}$ for every $x\in\R$ and $n\in\N$, and a suitable constant $D'>0$, it follows for every $x\in\R$, $i=0,\ldots,m$ and $n\in\N$ that 
	\begin{align*}
	|(C^n_\varphi f)^{(i)}(x)|&\leq \sum\frac{i!}{k_1!k_2!\ldots k_i!}|f^{(k)}(\varphi_n(x))|\left|\frac{\varphi'_n(x)}{1!}\right|^{k_1}\left|\frac{\varphi''_n(x)}{2!}\right|^{k_2}\ldots \left|\frac{\varphi_n^{(i)}(x)}{i!}\right|^{k_i}\\
	& \leq  \sum\frac{i!}{k_1!k_2!\ldots k_i!}|f|_{m,r}(1+\varphi_n(x)^2)^r\frac{D^k(1+x^2)^{pk}}{1!^{k_1}2!^{k_2}\ldots i!^{k_i}}\\&\leq  C_i |f|_{m,r}(1+x^2)^{p(2r+i)}\leq C |f|_{m,r}(1+x^2)^{p(2r+m)},
	\end{align*}
	where $C_i:=(D')^r\sum \frac{i!}{k_1!k_2!\ldots k_i!}\frac{D^k}{1!^{k_1}2!^{k_2}\ldots i!^{k_i}}$ and $C:=\max_{i=0,\ldots,m}C_i$. Accordingly,  if  $r':=p(2r+m)\in\N$, we get for all $n\in\N$ that 
	\begin{align*}
	|C^n_\varphi f|_{m,r'}\leq C |f|_{m,r}.
	\end{align*}
	Since $r\in\N$ and  $f\in\cO_r^m(\R)$ are arbitrary, the claim is proved.
	\\[0.1cm]
	We are now able to show that $C_\varphi$ is power bounded. Indeed, for a fixed $f\in \cO^m(\R)$,  there exists $r\in\N$ such that $f\in \cO_r^m(\R)$. By the proof above, it follows that  the sequence $\{C^n_\varphi f\}_{n\in\N}$ is bounded in $\cO^m_{r'}(\R)$ for some $r'\geq r$ and hence in  $\cO^m(\R)=\ind_{\stackrel{h}{\to}}\cO^m_{h}(\R)$, being it  an (LB)-space. So, we can conclude that $C_\varphi$ is power bounded on $\cO^m(\R)$ thanks to the Banach-Steinhaus theorem.

	$(2)\Leftrightarrow(3)$. Follows as $\cO^m(\R)$ is a regular (LB)-space.	
\end{proof}

Thanks to Theorem \ref{T.PowerBm}, we easily obtain the following characterization of the power boundedness of $C_\varphi$ when it acts in $\cO_M(\R)$. 

\begin{thm}\label{T.PowerB} Let $\varphi\in \cO_M(\R)$. Then the following properties are equivalent:
	\begin{enumerate}
		\item $C_{\varphi}$ is power bounded on $\cO_M(\R)$;
		\item $C_{\varphi}$ is power bounded on $\cO^m(\R)$ for all $m\in\N$;
		\item The sequence $\{\varphi_n\}_{n\in\N}$ is bounded in $\cO_{M}(\R)$;
		\item For all $i\in \N_0$ there exist $C,p>0$ such that $|\varphi_n^{(i)}(x)|\leq C(1+x^2)^p$ for all $x\in\R$ and $n\in\N$.
	\end{enumerate}
\end{thm}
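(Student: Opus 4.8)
The plan is to reduce everything to Theorem~\ref{T.PowerBm} by exploiting the identity $\cO_M(\R)=\proj_{\stackrel{m}{\leftarrow}}\cO^m(\R)$, establishing the cycle $(1)\Rightarrow(3)\Rightarrow(2)\Rightarrow(1)$ and then the equivalence $(3)\Leftrightarrow(4)$. Before starting I would record that, since $\varphi\in\cO_M(\R)\subseteq\cO^m(\R)$ for every $m\in\N$, Theorem~\ref{chasymom} and Theorem~\ref{thcomp} guarantee that $C_\varphi$ acts continuously on $\cO_M(\R)$ and on each $\cO^m(\R)$, so that ``power bounded'' makes sense in all these spaces, and that the canonical continuous inclusions $\cO_M(\R)\hookrightarrow\cO^m(\R)$ and $\cO^{m+1}(\R)\hookrightarrow\cO^m(\R)$ intertwine the respective composition operators, because $(C_\varphi f)(x)=f(\varphi(x))$ does not depend on the ambient space.

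The implications are then short. For $(1)\Rightarrow(3)$, apply the equicontinuous family $\{C_\varphi^n\}_{n\in\N}$ to $p_1(x)=x\in\cO_M(\R)$: its orbit $\{C_\varphi^n p_1\}_{n\in\N}=\{\varphi_n\}_{n\in\N}$ is then bounded in $\cO_M(\R)$. For $(3)\Rightarrow(2)$, the continuity of the inclusion $\cO_M(\R)\hookrightarrow\cO^m(\R)$ shows that $\{\varphi_n\}_{n\in\N}$ is bounded in $\cO^m(\R)$ for each $m$, and Theorem~\ref{T.PowerBm}, $(2)\Rightarrow(1)$, gives the power boundedness of $C_\varphi$ on every $\cO^m(\R)$. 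For $(2)\Rightarrow(1)$, I would argue as in the proof of Theorem~\ref{chasymom}: given a continuous seminorm $q$ on $\cO_M(\R)$, by the very definition of the projective limit topology there are $m\in\N$ and a continuous seminorm $q_m$ on $\cO^m(\R)$ with $q\le q_m$ on $\cO_M(\R)$; power boundedness of $C_\varphi$ on $\cO^m(\R)$ provides a continuous seminorm $\tilde q_m$ on $\cO^m(\R)$ with $q_m(C_\varphi^n g)\le\tilde q_m(g)$ for all $n\in\N$ and $g\in\cO^m(\R)$; since $C_\varphi^n f$ computed in $\cO_M(\R)$ agrees with $C_\varphi^n f$ computed in $\cO^m(\R)$, we get $q(C_\varphi^n f)\le\tilde q_m(f)$ for all $f\in\cO_M(\R)$ and $n\in\N$, so $\{C_\varphi^n\}_{n\in\N}$ is equicontinuous. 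Finally, for $(3)\Leftrightarrow(4)$: a subset of a projective limit is bounded if and only if its image in every step is bounded, hence $\{\varphi_n\}_{n\in\N}$ is bounded in $\cO_M(\R)$ if and only if it is bounded in $\cO^m(\R)$ for every $m\in\N$; by Theorem~\ref{T.PowerBm}, $(2)\Leftrightarrow(3)$, this holds if and only if for every $m$ and every $0\le i\le m$ there are $C,p>0$ with $|\varphi_n^{(i)}(x)|\le C(1+x^2)^p$ for all $x\in\R$ and $n\in\N$, which, letting $m$ range over $\N$, is exactly condition~(4).

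\textbf{Main obstacle.} There is no genuine difficulty once Theorem~\ref{T.PowerBm} is available: the statement is essentially projective-limit bookkeeping. The only point deserving care is the transfer of equicontinuity in $(2)\Rightarrow(1)$, which relies on the elementary fact that a fundamental system of continuous seminorms on $\proj_{\stackrel{m}{\leftarrow}}\cO^m(\R)$ is subordinate to the seminorms coming from the steps $\cO^m(\R)$; this, together with the compatibility of $C_\varphi$ with all the inclusions, is what makes the argument go through.
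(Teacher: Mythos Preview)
Your proof is correct and follows essentially the same strategy as the paper: both reduce the statement to Theorem~\ref{T.PowerBm} via the identity $\cO_M(\R)=\proj_{\stackrel{m}{\leftarrow}}\cO^m(\R)$, the only minor difference being that the paper invokes barrelledness of $\cO_M(\R)$ and the Banach--Steinhaus theorem to pass between equicontinuity and pointwise boundedness of orbits, whereas you transfer equicontinuity in $(2)\Rightarrow(1)$ directly through the projective-limit seminorms. Your version is simply a more detailed unpacking of the paper's one-line argument.
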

\begin{proof} 
	Since $\cO_M(\R)$ is the projective limit of the sequence $(\cO^m(\R))_{m\in\N}$ of (LB)-spaces and it is barrelled, the Banach-Steinhaus theorem yields that $C_{\varphi}$ is power bounded if, and only if, $\{f\circ \varphi_n\}_{n\in \N}$ is bounded in $\cO^m(\R)$ for any $m\in\N$ and any $f\in \cO_M(\R)$. So, the conclusion follows now immediately from Theorem \ref{T.PowerBm}.
\end{proof}
From Remark \ref{stable}, Theorems \ref{T.PowerBm} and \ref{T.PowerB} we get immediately the following fact.

\begin{prop} 
	\label{pbs}
	Let $\varphi\in \cO_M(\R)$ ($\varphi\in \cO^m(\R)$, for $m\in\N$, resp.). Then $C_\varphi$ is power bounded  in $C^\infty(\R)$ (in $C^m(\R)$, resp.) whenever  $C_\varphi$ is power bounded in $\cO_M(\R)$ (in $\cO^m(\R)$, resp.).
\end{prop}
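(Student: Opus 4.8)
The plan is to reduce power boundedness of $C_\varphi$ in the ambient space $C^m(\R)$ (resp.\ $C^\infty(\R)$) to the boundedness of the iterate sequence $\{\varphi_n\}_{n\in\N}$ there, via Remark \ref{stable}, and then to obtain that boundedness by simply reading off the polynomial growth estimates furnished by Theorem \ref{T.PowerBm} (resp.\ Theorem \ref{T.PowerB}). Before invoking Remark \ref{stable} one should note the (routine) point that any $\varphi\in\cO^m(\R)\subseteq C^m(\R)$ is automatically a symbol for $C^m(\R)$: indeed $f\circ\varphi\in C^m(\R)$ for every $f\in C^m(\R)$ by the Fa\`a di Bruno formula, and continuity of $C_\varphi$ on $C^m(\R)$ follows from that same formula together with the boundedness of $\varphi,\varphi',\dots,\varphi^{(m)}$ on each compact interval (exactly the mechanism already used in the proof of Proposition \ref{superposition}), so that the statement of Remark \ref{stable} does apply.

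First I would treat the case $\varphi\in\cO^m(\R)$ with $m\in\N$. Assume $C_\varphi$ is power bounded on $\cO^m(\R)$. By the equivalence $(1)\Leftrightarrow(3)$ in Theorem \ref{T.PowerBm}, for every $i\in\{0,\dots,m\}$ there are constants $C_i,p_i>0$ with $|\varphi_n^{(i)}(x)|\leq C_i(1+x^2)^{p_i}$ for all $x\in\R$ and all $n\in\N$. Now fix $k\in\N$. Then for $|x|\leq k$ and $n\in\N$ we have $|\varphi_n^{(i)}(x)|\leq C_i(1+k^2)^{p_i}$, so putting $M:=1+\max_{0\leq i\leq m}C_i(1+k^2)^{p_i}$ gives $\sup_{|x|\leq k}|\varphi_n^{(i)}(x)|<M$ for all $0\leq i\leq m$ and all $n\in\N$; that is, $\{\varphi_n\}_{n\in\N}$ is a bounded subset of $C^m(\R)$. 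By Remark \ref{stable} this is precisely the condition equivalent to $C_\varphi$ being power bounded on $C^m(\R)$, and we are done in this case.

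For the case $\varphi\in\cO_M(\R)$ the argument is identical, now using the equivalence $(1)\Leftrightarrow(4)$ of Theorem \ref{T.PowerB}: power boundedness of $C_\varphi$ on $\cO_M(\R)$ yields, for each $i\in\N_0$, a uniform-in-$n$ estimate $|\varphi_n^{(i)}(x)|\leq C(1+x^2)^p$, whence $\{\varphi_n\}_{n\in\N}$ is bounded on every $[-k,k]$ together with all its derivatives, i.e.\ bounded in $C^\infty(\R)$; Remark \ref{stable} with $m=\infty$ then gives that $C_\varphi$ is power bounded on $C^\infty(\R)$. Since each step is a direct substitution into results already established, there is no genuine obstacle; the only point requiring a sentence of justification is the verification noted above that $\varphi\in\cO^m(\R)$ is a symbol for $C^m(\R)$, so that Remark \ref{stable} is legitimately applicable.
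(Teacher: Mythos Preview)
Your proof is correct and follows exactly the approach indicated in the paper, which simply states that the result is an immediate consequence of Remark \ref{stable} together with Theorems \ref{T.PowerBm} and \ref{T.PowerB}. Your write-up just makes explicit the obvious intermediate step of passing from the uniform polynomial bounds to boundedness of $\{\varphi_n\}_{n\in\N}$ in $C^m(\R)$ (resp.\ $C^\infty(\R)$), and adds the routine check that $\varphi$ is a symbol for $C^m(\R)$.
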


Combining Theorem \ref{onefixed} with Theorem \ref{T.PowerB}, we immediately obtain the following characterization, that should be compared with \cite[Theorem 3.13]{Kalmes}. 

\begin{cor} Let $\varphi\in \cO_M(\R)$ be an increasing function. Then $\{C_{\varphi_n}\}_{n\in\N}$ is convergent in $\cL_s(\cO_M(\R))$ if, and only if, $C_{\varphi}$ is power bounded on $\cO_M(\R)$ if, and only if,  $\{\varphi_n\}_{n\in\N}$ is bounded in $\cO_M(\R)$.
\end{cor}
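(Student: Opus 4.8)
The plan is to obtain the statement simply by splicing together the two characterizations already established. One of the three equivalences, namely ``$C_\varphi$ is power bounded on $\cO_M(\R)$'' $\Leftrightarrow$ ``$\{\varphi_n\}_{n\in\N}$ is bounded in $\cO_M(\R)$'', is nothing but the equivalence $(1)\Leftrightarrow(3)$ of Theorem \ref{T.PowerB}, so no new work is needed there. The substance to be extracted is therefore the equivalence of these conditions with the convergence of $\{C_{\varphi_n}\}_{n\in\N}$ in $\cL_s(\cO_M(\R))$, and for this I would invoke Theorem \ref{onefixed}.

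First I would check that $X=\cO_M(\R)$ satisfies the hypotheses of Theorem \ref{onefixed}: it is a bornological nuclear lcHs, hence Montel (and barrelled); it is continuously included in $C^\infty(\R)$, a fortiori in $C^1(\R)$; it contains $p_1(x)=x$; and, by Theorem \ref{chasymom}, every element of $\cO_M(\R)$ is a symbol for $\cO_M(\R)$, so $\cO_M(\R)$ is closed under composition. Condition $(*)$ of Theorem \ref{onefixed} is precisely the equivalence $(1)\Leftrightarrow(3)$ of Theorem \ref{T.PowerB}. The only remaining hypothesis of Theorem \ref{onefixed} is $\varphi_2\neq p_1$, which I would dispose of as follows. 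If $\varphi=p_1$, then $C_\varphi=I$, $\varphi_n=p_1$ for all $n$, and $\{C_{\varphi_n}\}_{n\in\N}=\{I\}$ trivially converges in $\cL_s(\cO_M(\R))$, so all three conditions hold simultaneously and there is nothing to prove. If $\varphi\neq p_1$, then since $\varphi$ is increasing one has $\varphi_2\neq p_1$: picking $x_0$ with, say, $\varphi(x_0)>x_0$, monotonicity gives $\varphi_2(x_0)=\varphi(\varphi(x_0))>\varphi(x_0)>x_0$.

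With the hypotheses in place, Theorem \ref{onefixed} applied to $X=\cO_M(\R)$ gives that $C_\varphi$ is power bounded on $\cO_M(\R)$ if, and only if, $\varphi$ has an attracting fixed point $a$ and $\{C_{\varphi_n}\}_{n\in\N}$ converges to $C_a$ in $\cL_s(\cO_M(\R))$; in particular power boundedness of $C_\varphi$ implies convergence of $\{C_{\varphi_n}\}_{n\in\N}$ in $\cL_s(\cO_M(\R))$. For the converse I would argue directly: if $\{C_{\varphi_n}\}_{n\in\N}$ converges in $\cL_s(\cO_M(\R))$, then, since $C_{\varphi_n}=C_\varphi^{\,n}$, the sequence $\{C_\varphi^{\,n}\}_{n\in\N}$ is pointwise bounded, and as $\cO_M(\R)$ is barrelled the Banach--Steinhaus theorem makes $\{C_\varphi^{\,n}\}_{n\in\N}$ equicontinuous, i.e.\ $C_\varphi$ is power bounded. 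Combining this with Theorem \ref{T.PowerB} closes the cycle of equivalences.

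I do not expect a real obstacle: the statement is a corollary obtained by gluing Theorems \ref{onefixed} and \ref{T.PowerB}. The only points demanding a moment's care are the verification that $\cO_M(\R)$ meets all the structural hypotheses of Theorem \ref{onefixed} (Montel, closed under composition, $\hookrightarrow C^1(\R)$, $p_1\in X$, and condition $(*)$) and the handling of the degenerate case $\varphi_2=p_1$, which for an increasing $\varphi$ forces $\varphi=p_1$ and is therefore trivial.
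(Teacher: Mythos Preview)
Your proposal is correct and follows exactly the route the paper takes: the corollary is stated as an immediate consequence of Theorem \ref{onefixed} combined with Theorem \ref{T.PowerB}, and your write-up simply makes explicit the hypothesis checks (Montel, closed under composition via Theorem \ref{chasymom}, condition $(*)$ via Theorem \ref{T.PowerB}) and the trivial case $\varphi=p_1$ that the paper leaves implicit.
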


{\bf Acknowledgements.} This paper was completed during the visit of the third author to the Univ\`{e}rsitat Politecnica de Val\`{e}ncia. He expresses his gratitude to Jos\'e Bonet and his collaborators for their kind hospitality and for their valuable suggestions and comments. 

The authors would like to thank the referee for his/her careful reading of the work and his/her valuable suggestions, which certainly improved the work. 

\end{document}